\documentclass{article}%
\usepackage{amsfonts}
\usepackage{amsmath}
\usepackage{amssymb}
\usepackage{graphicx}%
\setcounter{MaxMatrixCols}{30}
\providecommand{\U}[1]{\protect\rule{.1in}{.1in}}
\newtheorem{theorem}{Theorem}

\newtheorem{corollary}[theorem]{Corollary}

\newtheorem{proposition}[theorem]{Proposition}

\newenvironment{proof}[1][Proof]{\noindent\textbf{#1.} }{\ \rule{0.5em}{0.5em}}
\begin{document}

\title{Higher order generalized geometric polynomials}
\author{Levent Karg\i n* and Bayram \c{C}ekim**
\and * Alanya Alaaddin Keykubat University\\Akseki Vocational School TR-07630 Antalya Turkey\\Gazi University, Faculty of Science,\\Department of Mathematics, Teknikokullar TR-06500,\\Ankara, Turkey.\\leventkargin48@gmail.com.tr and **bayramcekim@gazi.edu.tr}
\maketitle

\begin{abstract}
According to generalized Mellin derivative \cite[Eq. (2.5)]{Kargin}, we
introduce a new family of polynomials called higher order generalized
geometric polynomials. We obtain some properties of them.We discuss their
connections to degenerate Bernoulli and Euler polynomials. Furthermore, we
find new formulas for the Carlitz's \cite[Eq. (5.4)]{Carlitz} and Howard's
\cite[Eq. (4.3)]{Howard2} finite sums. Finally, we evaluate several series in
closed forms, one of which has the coefficients include values of the Riemann
zeta function. Moreover, we calculate some integrals in terms of generalized
geometric polynomials.

\textbf{2000 Mathematics Subject Classification: }11B68, 11B83,11M35, 33B99.

\textbf{Key words: }Generalized geometric polynomials, Bernoulli polynomials,
Euler polynomials, Riemann zeta function.

\end{abstract}

\section{Introduction}

The operator $\left(  x\frac{d}{dx}\right)  ^{n}$, called Mellin derivative
\cite{B2}, has a long mathematical history. As far back as 1740, Euler used
the operator as a tool work in his work \cite{Euler}. The Mellin derivative
and its generalizations are used to obtain a new class of polynomials \cite{B,
Dil1, Dil2, Kargin, MezoandRamirez}, to evaluate some power series in closed
forms \cite{B, BandDil, Dil1, Dil2, Kargin, Knopf, MezoandRamirez} and to
calculate some integrals \cite{B2, BandDil}. One of the generalizations of
Mellin derivative is
\begin{equation}
\left(  \beta x^{1-\alpha/\beta}D\right)  ^{n}\left[  x^{r/\beta}f\left(
x\right)  \right]  =x^{\left(  r-n\alpha\right)  /\beta}\sum_{k=0}^{n}S\left(
n,k;\alpha,\beta,r\right)  \beta^{k}x^{k}f^{\left(  k\right)  }\left(
x\right)  ,\label{1}%
\end{equation}
where $f$ is any $n$-times differentiable function and $S\left(
n,k;\alpha,\beta,r\right)  $ are generalized Stirling number pair with three
free parameters (see Section 2). Stirling numbers and their generalizations
have many interesting combinatorial interpretations. Besides, these numbers
are connected with some well known special polynomials and numbers
\cite{Cenkci1, Cenkci2, Dagli1, Howard1, Merca1, Merca2, Merca3, Mezo2,
Mihioubi2, Young, Young2}. For example, the following interesting formulas for
Bernoulli numbers $B_{n}$ and Euler polynomials $E_{n}\left(  x\right)  $
appear in \cite{Graham}: For all $n\geq0$;%
\begin{equation}
B_{n}=\sum_{k=0}^{n}\left(  -1\right)  ^{k}\frac{k!}{k+1}%
\genfrac{\{}{\}}{0pt}{}{n}{k}%
,\text{ \ }E_{n}\left(  0\right)  =\sum_{k=0}^{n}\left(  -1\right)  ^{k}%
\frac{k!}{2^{k}}%
\genfrac{\{}{\}}{0pt}{}{n}{k}%
.\label{14}%
\end{equation}

From all these motivations, by using the generalized of Mellin derivative in
(\ref{1}), we introduce a new family of polynomials $w_{n}^{\left(
s+1\right)  }\left(  x;\alpha,\beta,r\right)  $, called higher order
generalized geometric polynomials, evaluate some power series in closed forms
and calculate some integrals. After that, in view of the properties of higher
order generalized geometric polynomials, we derive new explicit formulas for
degenerate Bernoulli polynomials, Bernoulli polynomials and degenerate Euler
polynomials. As a consequences of these formulas we evaluate Carlitz's
\cite[Eq. (5.4)]{Carlitz} and Howard's \cite[Eq. (4.3)]{Howard2} sums.

The summary by sections is as follows: Section 2 is the preliminary section
where we give definitions and known results needed. In Section 3, we define
higher order generalized geometric polynomials and obtain some properties such
as recurrence relation and generating function of $w_{n}^{\left(  s+1\right)
}\left(  x;\alpha,\beta,r\right)  .$ Moreover, we derive new explicit formulas
for degenerate Bernoulli polynomials, Bernoulli polynomials and degenerate
Euler polynomials. By the help of these formulas we find new formulas for
Carlit's and Howard's sums. In final section, we obtain some integral
representation of $w_{n}^{\left(  s+1\right)  }\left(  x;\alpha,\beta
,r\right)  $. Besides, we evaluate several power series in closed forms. For
example, in the special cases, the following series, where the coefficients
include values of the Riemann zeta function, is evaluated in closed form (for
any integer $n\geq0$)%
\[
\sum_{k=2}^{\infty}\frac{\zeta\left(  k\right)  k^{n}}{2^{k}}=\log2+\sum
_{k=1}^{n}%
\genfrac{\{}{\}}{0pt}{}{n+1}{k+1}%
k!\left(  1-2^{-k-1}\right)  \zeta\left(  k+1\right)  .
\]

Throughout this paper we assume that $\alpha,\beta$ and $r$ are real or
complex numbers.

\section{Preliminaries}

The generalized Stirling numbers of the first kind $S_{1}\left(
n,k;\alpha,\beta,r\right)  $ and of the second kind $S_{2}\left(
n,k;\alpha,\beta,r\right)  $ for non-negative integer $m$ and real or complex
parameters $\alpha,$ $\beta$ and $r,$ with $\left(  \alpha,\beta,r\right)
\neq\left(  0,0,0\right)  $ are defined by means of the generating function
\cite{HSU}%
\begin{align*}
\left(  \frac{\left(  1+\beta t\right)  ^{\alpha/\beta}-1}{\alpha}\right)
^{k}\left(  1+\beta t\right)  ^{r/\beta}  &  =k!\sum_{n=0}^{\infty}%
S_{1}\left(  n,k;\alpha,\beta,r\right)  \frac{t^{n}}{n!},\\
\left(  \frac{\left(  1+\alpha t\right)  ^{\beta/\alpha}-1}{\beta}\right)
^{k}\left(  1+\alpha t\right)  ^{-r/\alpha}  &  =k!\sum_{n=0}^{\infty}%
S_{2}\left(  n,k;\alpha,\beta,r\right)  \frac{t^{n}}{n!},
\end{align*}
with the convention $S_{1}\left(  n,k;\alpha,\beta,r\right)  =S_{2}\left(
n,k;\alpha,\beta,r\right)  =0$ when $k>n.$

As Hsu and Shiue pointed out, the definitions or generating functions
generalize various Stirling-type numbers studied previously, such as:

\begin{description}
\item[i.] $\left\{  S_{1}\left(  n,k;0,1,0\right)  ,\text{ }S_{2}\left(
n,k;0,1,0\right)  \right\}  =\left\{  s\left(  n,k\right)  ,\text{ }S\left(
n,k\right)  \right\}  =\left\{  \left(  -1\right)  ^{n-k}%
\genfrac{[}{]}{0pt}{}{n}{k}%
,\text{ }%
\genfrac{\{}{\}}{0pt}{}{n}{k}%
\right\}  $

\item are the Stirling numbers of both kinds \cite[Chapter 6]{Graham}.

\item[ii.] $\left\{  S_{1}\left(  n,k;\alpha,1,-r\right)  ,\text{ }%
S_{2}\left(  n,k;\alpha,1,-r\right)  \right\}  =\left\{  \left(  -1\right)
^{n-k}S_{1}\left(  n,k,r+\alpha\mid\alpha\right)  ,\text{ }S_{2}\left(
n,k,r\mid\alpha\right)  \right\}  $

\item are the Howard degenerate weighted Stirling numbers of both kinds
\cite{Howard}.

\item[iii.] $\left\{  S_{1}\left(  n,k;\alpha,1,0\right)  ,\text{ }%
S_{2}\left(  n,k;\alpha,1,0\right)  \right\}  =\left\{  \left(  -1\right)
^{n-k}S_{1}\left(  n,k\mid\alpha\right)  ,\text{ }S_{2}\left(  n,k\mid
\alpha\right)  \right\}  $

\item are the Carlitz's degenerate Stirling numbers of both kinds
\cite{Carlitz}.

\item[iv.] $\left\{  S_{1}\left(  n,k;0,-1,r\right)  ,\text{ }S_{2}\left(
n,k;0,1,r\right)  \right\}  =\left\{  \left(  -1\right)  ^{n-k}%
\genfrac{[}{]}{0pt}{}{n+r}{k+r}%
_{r},\text{ }%
\genfrac{\{}{\}}{0pt}{}{n+r}{k+r}%
_{r}\right\}  $

\item are the $r$-Stirling numbers of both kinds \cite{Broder}.

\item[v.] $\left\{  S_{1}\left(  n,k;0,\beta,-1\right)  ,\text{ }S_{2}\left(
n,k;0,\beta,1\right)  \right\}  =\left\{  w_{\beta}\left(  n,k\right)  ,\text{
}W_{\beta}\left(  n,k\right)  \right\}  $

\item are the Whitney numbers of the first and second kind \cite{Benoumhani2}.

\item[vi.] $\left\{  S_{1}\left(  n,k;0,\beta,-r\right)  ,\text{ }S_{2}\left(
n,k;0,\beta,r\right)  \right\}  =\left\{  w_{\beta,r}\left(  n,k\right)
,\text{ }W_{\beta,r}\left(  n,k\right)  \right\}  $

\item are the $r$-Whitney numbers of the first and second kind \cite{Mezo2},
\end{description}

and so on.

According to the generalization of Stirling numbers, Hsu and Shiue \cite{HSU}
defined generalized exponential polynomials $S_{n}\left(  x\right)  $ as
follows%
\begin{equation}
S_{n}\left(  x\right)  =\sum_{k=0}^{n}S\left(  n,k;\alpha,\beta,r\right)
x^{k}. \label{6}%
\end{equation}
Later, Kargin and Corcino \cite{Kargin} gave an equivalent definition for
$S_{n}\left(  x\right)  $ as%
\begin{equation}
S_{n}\left(  x\right)  =\left[  \left(  x^{n\alpha-r}e^{-x}\right)  \right]
^{1/\beta}\left(  \beta x^{1-\alpha/\beta}D\right)  ^{n}\left[  \left(
x^{r}e^{x}\right)  ^{1/\beta}\right]  , \label{15}%
\end{equation}
and using the series%
\[
e^{x/\beta}=\sum_{k=0}^{\infty}\frac{x^{k}}{\beta^{k}k!},
\]
in (\ref{15}), they obtained the general Dobinksi-type formula \cite{HSU}%
\begin{equation}
e^{x/\beta}S_{n}\left(  x\right)  =\sum_{k=0}^{\infty}\frac{\left(
k\beta+r\mid\alpha\right)  _{n}x^{k}}{\beta^{k}k!}. \label{16}%
\end{equation}
Here, $\left(  z\mid\alpha\right)  _{n}$ is called the generalized factorial
of $z$ with increment $\alpha$, defined by $\left(  z\mid\alpha\right)
_{n}=z\left(  z-\alpha\right)  \cdots\left(  z-n\alpha+\alpha\right)  $ for
$n=1,2,\ldots,$ and $\left(  z\mid\alpha\right)  _{0}=1.$ In particular, we
have $\left(  z\mid1\right)  _{n}=\left(  z\right)  _{n}.$

Some other properties of $S_{n}\left(  x\right)  $ can be found in
\cite{Corcino4, Corcino5, HSU, Aimin}.

The generalized geometric polynomials $w_{n}\left(  x;\alpha,\beta,r\right)  $
are defined by means of the generalized Mellin derivative as%
\[
\left(  \beta x^{1-\alpha/\beta}D\right)  ^{n}\left[  \frac{x^{r/\beta}}%
{1-x}\right]  =\frac{x^{\left(  r-n\alpha\right)  /\beta}}{1-x}w_{n}\left(
\frac{x}{1-x};\alpha,\beta,r\right)  .
\]
These polynomials have the explicit formula
\begin{equation}
w_{n}\left(  x;\alpha,\beta,r\right)  =\sum_{k=0}^{n}S\left(  n,k;\alpha
,\beta,r\right)  \beta^{k}k!x^{k}, \label{11}%
\end{equation}
and the generating function
\begin{equation}
\sum_{n=0}^{\infty}w_{n}\left(  x;\alpha,\beta,r\right)  \frac{t^{n}}%
{n!}=\frac{\left(  1+\alpha t\right)  ^{r/\alpha}}{1-x\left(  \left(  1+\alpha
t\right)  ^{\beta/\alpha}-1\right)  },\text{ \ }\alpha\beta\neq0. \label{19}%
\end{equation}
See \cite{Kargin} for details.

The higher order degenerate Euler polynomials are defined by means of the
generating function \cite{Carlitz}%
\begin{equation}
\sum_{n=0}^{\infty}\mathcal{E}_{n}^{\left(  s\right)  }\left(  \alpha
;x\right)  \frac{t^{n}}{n!}=\left(  \frac{2}{\left(  1+\alpha t\right)
^{1/\alpha}+1}\right)  ^{s}\left(  1+\alpha t\right)  ^{x/\alpha}. \label{9}%
\end{equation}

From (\ref{9}), we have%
\[
\underset{\alpha\rightarrow0}{\lim}\mathcal{E}_{n}^{\left(  s\right)  }\left(
\alpha;x\right)  =E_{n}^{\left(  s\right)  }\left(  x\right)  ,
\]
where $E_{n}^{\left(  s\right)  }\left(  x\right)  $ are the higher order
Euler polynomials which are defined by the generating function%
\begin{equation}
\sum_{n=0}^{\infty}E_{n}^{\left(  s\right)  }\left(  x\right)  \frac{t^{n}%
}{n!}=\left(  \frac{2}{e^{t}+1}\right)  ^{s}e^{xt}. \label{28}%
\end{equation}

In special cases,%
\[
\mathcal{E}_{n}^{\left(  1\right)  }\left(  \alpha;x\right)  =\mathcal{E}%
_{n}\left(  \alpha;x\right)  ,\text{ }E_{n}^{\left(  1\right)  }\left(
x\right)  =E_{n}\left(  x\right)  ,
\]
where $\mathcal{E}_{n}\left(  \alpha;x\right)  $ and $E_{n}\left(  x\right)  $
are the degenerate Euler and Euler polynomials, respectively.

The degenerate Bernoulli polynomials of the second kind are defined by means
of the generating function \cite{Kim}%
\begin{equation}
\sum_{n=0}^{\infty}B_{n}\left(  x\mid\alpha\right)  \frac{t^{n}}{n!}%
=\frac{\frac{1}{\alpha}\log\left(  1+\alpha t\right)  }{\left(  1+\alpha
t\right)  ^{1/\alpha}-1}\left(  1+\alpha t\right)  ^{x/\alpha}. \label{34}%
\end{equation}

Indeed, we get%
\[
\underset{\alpha\rightarrow0}{\lim}B_{n}\left(  x\mid\alpha\right)
=B_{n}\left(  x\right)  ,
\]
where $B_{n}\left(  x\right)  $ are the Bernoulli polynomials which are
defined by the generating function%
\[
\sum_{n=0}^{\infty}B_{n}\left(  x\right)  \frac{t^{n}}{n!}=\frac{t}{e^{t}%
-1}e^{xt},
\]
with $B_{n}\left(  0\right)  =B_{n}$ are $n$-the Bernoulli numbers.

Finally, we want to mention Carlitz's degenerate Bernoulli polynomials defined
by means of the generating function \cite{Carlitz}%
\[
\sum_{n=0}^{\infty}\beta_{n}\left(  \alpha,x\right)  \frac{t^{n}}{n!}=\frac
{t}{\left(  1+\alpha t\right)  ^{1/\alpha}-1}\left(  1+\alpha t\right)
^{x/\alpha},
\]
with the relation
\[
\underset{\alpha\rightarrow0}{\lim}\beta_{n}\left(  \alpha,x\right)
=B_{n}\left(  x\right)
\]
and with $\beta_{n}\left(  \alpha,0\right)  =\beta_{n}\left(  \alpha\right)  $
are $n$-the degenerate Bernoulli numbers.

\section{Higher order generalized geometric polynomials}

In this section, the definition of higher order generalized geometric
polynomials and some properties are given. Besides, new explicit formulas for
degenerate Bernoulli and Euler polynomials are derived.\textbf{ }Some special
cases of these results are studied.

For every $s\geq0,$ taking $f\left(  x\right)  =1/\left(  1-x\right)  ^{s+1}$
in (\ref{1}) and using
\[
\frac{\partial^{k}}{\partial x^{k}}f\left(  x\right)  =\frac{\left(
s+1\right)  \left(  s+2\right)  \ldots\left(  s+k\right)  }{\left(
1-x\right)  ^{s+k+1}},
\]
we have
\[
\left(  \beta x^{1-\alpha/\beta}D\right)  ^{n}\left[  \frac{x^{r/\beta}%
}{\left(  1-x\right)  ^{s+1}}\right]  =\frac{x^{\left(  r-n\alpha\right)
/\beta}}{\left(  1-x\right)  ^{s+1}}\sum_{k=0}^{n}S\left(  n,k;\alpha
,\beta,r\right)  \binom{s+k}{k}k!\beta^{k}\left(  \frac{x}{1-x}\right)  ^{k}.
\]

If we define the polynomials $w_{n}^{\left(  s+1\right)  }\left(
x;\alpha,\beta,r\right)  $ by
\begin{equation}
w_{n}^{\left(  s+1\right)  }\left(  x;\alpha,\beta,r\right)  =\sum_{k=0}%
^{n}S\left(  n,k;\alpha,\beta,r\right)  \binom{s+k}{k}k!\beta^{k}x^{k},
\label{3}%
\end{equation}
we find
\begin{equation}
\left(  \beta x^{1-\alpha/\beta}D\right)  ^{n}\left[  \frac{x^{r/\beta}%
}{\left(  1-x\right)  ^{s+1}}\right]  =\frac{x^{\left(  r-n\alpha\right)
/\beta}}{\left(  1-x\right)  ^{s+1}}w_{n}^{\left(  s+1\right)  }\left(
\frac{x}{1-x};\alpha,\beta,r\right)  . \label{4}%
\end{equation}

Note that if $\left(  s,\alpha,\beta,r\right)  =\left(  0,\alpha
,\beta,r\right)  ,$ we obtain generalized geometric polynomials \cite{Kargin},
if $\left(  s,\alpha,\beta,r\right)  =\left(  s,0,1,0\right)  ,$ we have
general geometric polynomials \cite{B}, if $\left(  s,\alpha,\beta,r\right)
=\left(  0,0,1,0\right)  ,$ we obtain geometric polynomials \cite{B}, if
$\left(  s,\alpha,\beta,r\right)  =\left(  0,0,\beta,1\right)  ,$ we have
Tanny-Dowling polynomials \cite{Benoumhani2}.

For $x=1$ and $s=0$ in (\ref{3}) gives the numbers%
\[
w_{n}^{\left(  1\right)  }\left(  1;\alpha,\beta,r\right)  =B_{n}\left(
\alpha,\beta,r\right)  =\sum_{k=0}^{n}S\left(  n,k;\alpha,\beta,r\right)
\beta^{k}k!x^{k},
\]
which was defined by Corcino et al. \cite{Corcino6}. The combinatorial
interpretation and some other properties can be found in \cite{Corcino6,
Corcino4}. Moreover, $w_{n}^{\left(  s+1\right)  }\left(  x;\alpha
,\beta,r\right)  $ reduce to barred preferential arrangement numbers $r_{n,s}$
defined by \cite{Albach, B}%
\[
w_{n}^{\left(  s+1\right)  }\left(  1;0,1,0\right)  =r_{n,s}=\sum_{k=0}^{n}%
\genfrac{\{}{\}}{0pt}{}{n}{k}%
\binom{s+k}{k}k!,
\]
which have interesting combinatorial meaning. Therefore, we may call
generalized barred preferential arrangement number pair with three free
parameters for
\[
B_{n}^{\left(  s+1\right)  }\left(  \alpha,\beta,r\right)  =\sum_{k=0}%
^{n}S\left(  n,k;\alpha,\beta,r\right)  \binom{s+k}{k}k!\beta^{k}x^{k}.
\]
The combinatorial interpretation of these numbers may also be studied.

On the other hand, we can write $w_{n}^{\left(  s+1\right)  }\left(
x;\alpha,\beta,r\right)  $ in the form%
\[
w_{n}^{\left(  s+1\right)  }\left(  x;\alpha,\beta,r\right)  =\sum_{k=0}%
^{n}S\left(  n,k;\alpha,\beta,r\right)  \left\langle s+1\right\rangle
_{k}\beta^{k}x^{k},
\]
since
\[
\frac{\left\langle x\right\rangle _{n}}{n!}=\binom{x+n-1}{n},
\]
where $\left\langle x\right\rangle _{n}$ is the rising factorial defined by
$\left\langle x\right\rangle _{n}=x\left(  x+1\right)  \cdots\left(
x+n-1\right)  ,$ for $n=1,2,\ldots,$ with $\left\langle x\right\rangle
_{0}=1.$\textbf{ }Furthermore, using the relation
\begin{equation}
\left\langle -x\right\rangle _{n}=\left(  -1\right)  ^{n}\left(  x\right)
_{n}, \label{36}%
\end{equation}
if we define $w_{n}^{\left(  -s\right)  }\left(  x;\alpha,\beta,r\right)  $
for every $s>0$ as
\[
w_{n}^{\left(  -s\right)  }\left(  x;\alpha,\beta,r\right)  =\sum_{k=0}%
^{n}S\left(  n,k;\alpha,\beta,r\right)  \left(  s\right)  _{k}\left(
-\beta\right)  ^{k}x^{k},
\]
we find
\begin{equation}
\left(  \beta x^{1-\alpha/\beta}D\right)  ^{n}\left[  x^{r/\beta}\left(
1-x\right)  ^{s}\right]  =x^{\left(  r-n\alpha\right)  /\beta}\left(
1-x\right)  ^{s}w_{n}^{\left(  -s\right)  }\left(  \frac{x}{1-x};\alpha
,\beta,r\right)  \label{38}%
\end{equation}
by taking $f\left(  x\right)  =\left(  1-x\right)  ^{s}$ in (\ref{1}).

We turn over $w_{n}^{\left(  -s\right)  }\left(  x;\alpha,\beta,r\right)  $ in
Section 4.

Now, we want to deal with the properties of $w_{n}^{\left(  s\right)  }\left(
x;\alpha,\beta,r\right)  .$ The exponential and higher order generalized
geometric polynomials are connected by the relation (for any non-negative
integer $n$ and every $s>0$)%
\begin{equation}
w_{n}^{\left(  s\right)  }\left(  x;\alpha,\beta,r\right)  =\frac{1}%
{\Gamma\left(  s\right)  }%
{\displaystyle\int\limits_{0}^{\infty}}
z^{s-1}S_{n}\left(  x\beta z\right)  e^{-z}dz, \label{7}%
\end{equation}
which is verified immediately by using (\ref{6}). So, we can derive the
properties of $w_{n}^{\left(  s\right)  }\left(  x;\alpha,\beta,r\right)  $
from those of $S_{n}\left(  x\right)  $. For example, if we use the extension
of Spivey's Bell number formula to $S_{n}\left(  x\right)  $ \cite{Kargin,
Aimin}%
\[
S_{n+m}\left(  x\right)  =\sum_{k=0}^{n}\sum_{j=0}^{m}\binom{n}{k}S\left(
m,j;\alpha,\beta,r\right)  \left(  j\beta-m\alpha\mid\alpha\right)
_{n-k}S_{k}\left(  x\right)  x^{j}%
\]
in (\ref{7}) and evaluate the the integral, we derive a recurrence relation
as
\begin{align*}
&  w_{n+m}^{\left(  s\right)  }\left(  x;\alpha,\beta,r\right) \\
&  \quad=\sum_{k=0}^{n}\sum_{j=0}^{m}\binom{n}{k}S\left(  m,j;\alpha
,\beta,r\right)  \left(  j\beta-m\alpha\mid\alpha\right)  _{n-k}\left\langle
s+1\right\rangle _{j}\beta^{j}w_{k}^{\left(  s+j\right)  }\left(
x;\alpha,\beta,r\right)  x^{j}.
\end{align*}

As another application of (\ref{7}), we give the following theorem.

\begin{theorem}
The exponential generating function for $w_{n}^{\left(  s\right)  }\left(
x;\alpha,\beta,r\right)  $ is
\begin{equation}
\sum_{n=0}^{\infty}w_{n}^{\left(  s\right)  }\left(  x;\alpha,\beta,r\right)
\frac{t^{n}}{n!}=\left(  \frac{1}{1-x\left(  \left(  1+\alpha t\right)
^{\beta/\alpha}-1\right)  }\right)  ^{s}\left(  1+\alpha t\right)  ^{r/\alpha
}, \label{8}%
\end{equation}
where $\alpha\beta\neq0.$
\end{theorem}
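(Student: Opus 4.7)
The plan is to combine the integral representation (\ref{7}) with an exponential generating function for $S_n(y)$ and then evaluate a Gamma integral. First I would compute $\sum_n S_n(y)\,t^n/n!$. Substituting the defining expansion (\ref{6}) and interchanging the order of summation, together with the generating function of the $S(n,k;\alpha,\beta,r)$ recalled in Section~2, gives
\begin{equation*}
\sum_{n=0}^{\infty}S_{n}(y)\frac{t^{n}}{n!}=(1+\alpha t)^{r/\alpha}\sum_{k=0}^{\infty}\frac{1}{k!}\left(\frac{y}{\beta}\bigl((1+\alpha t)^{\beta/\alpha}-1\bigr)\right)^{k}=(1+\alpha t)^{r/\alpha}\exp\!\left(\frac{y}{\beta}\bigl((1+\alpha t)^{\beta/\alpha}-1\bigr)\right).
\end{equation*}
(This also recovers (\ref{19}) when one further sums over $s=1$ via the same integral trick used below.)

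Next I would set $y=x\beta z$, multiply by $z^{s-1}e^{-z}/\Gamma(s)$, integrate from $0$ to $\infty$, and interchange the integral with the $n$-sum, which by (\ref{7}) produces the target series on the left. The right-hand side becomes
\begin{equation*}
\frac{(1+\alpha t)^{r/\alpha}}{\Gamma(s)}\int_{0}^{\infty}z^{s-1}\exp\!\bigl(-z\bigl(1-x((1+\alpha t)^{\beta/\alpha}-1)\bigr)\bigr)\,dz.
\end{equation*}
The substitution $v=z\bigl(1-x((1+\alpha t)^{\beta/\alpha}-1)\bigr)$ collapses this to $\bigl(1-x((1+\alpha t)^{\beta/\alpha}-1)\bigr)^{-s}(1+\alpha t)^{r/\alpha}$, which is precisely the claimed identity (\ref{8}).

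The only delicate point will be justifying convergence: the Gamma integral requires $\mathrm{Re}\bigl(1-x((1+\alpha t)^{\beta/\alpha}-1)\bigr)>0$, and the interchange of sum and integral uses absolute convergence of the inner series. Both conditions are satisfied in a neighbourhood of $t=0$ for each fixed $x$, which suffices to establish the identity as an equality of formal power series in $t$. Alternatively, one can bypass this step entirely by expanding the right-hand side of (\ref{8}) as a geometric series in $x((1+\alpha t)^{\beta/\alpha}-1)$ and matching coefficients directly against (\ref{3}) via the generating function of $S(n,k;\alpha,\beta,r)$; this is essentially the reverse direction of the same computation, and it makes the identity a purely algebraic statement about the generalized Stirling numbers.
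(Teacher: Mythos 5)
Your proposal is correct and follows essentially the same route as the paper: both start from the exponential generating function $\sum_{n}S_{n}(x\beta z)t^{n}/n!=(1+\alpha t)^{r/\alpha}\exp\bigl(xz((1+\alpha t)^{\beta/\alpha}-1)\bigr)$, multiply by $z^{s-1}e^{-z}/\Gamma(s)$, integrate over $z$ using (\ref{7}), and evaluate the resulting Gamma integral. The only differences are cosmetic: you re-derive the $S_{n}$ generating function from the Stirling-number generating function instead of citing it, and you add remarks on convergence and an alternative coefficient-matching argument.
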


\begin{proof}
From \cite[Eq. (12)]{HSU}, let us write the generating function for
$S_{n}\left(  x\right)  $ in the form
\[
\sum_{n=0}^{\infty}S_{n}\left(  x\beta z\right)  \frac{t^{n}}{n!}=\left(
1+\alpha t\right)  ^{r/\alpha}\exp\left[  xz\left(  \left(  1+\alpha t\right)
^{\beta/\alpha}-1\right)  \right]  .
\]
Then multiply both sides by $z^{s-1}e^{-z}$ and integrate for $z$ from zero to
infinity. In view of (\ref{7}) this gives
\[
\sum_{n=0}^{\infty}w_{n}^{\left(  s\right)  }\left(  x;\alpha,\beta,r\right)
\frac{t^{n}}{n!}=\frac{\left(  1+\alpha t\right)  ^{r/\alpha}}{\Gamma\left(
s\right)  }%
{\displaystyle\int\limits_{0}^{\infty}}
z^{s-1}e^{-z\left(  1-x\left(  \left(  1+\alpha t\right)  ^{\beta/\alpha
}-1\right)  \right)  }dz.
\]
Calculating the integral on the right-hand side completes the proof.
\end{proof}

Setting $x=-1$ in (\ref{8}), we have%
\[
w_{n}^{\left(  s\right)  }\left(  -1;\alpha,\beta,r\right)  =w_{n}\left(
-1;\alpha,\beta,r\right)  =\left(  r-\beta s\mid\alpha\right)  _{n}.
\]

Some other properties of $w_{n}^{\left(  s\right)  }\left(  x;\alpha
,\beta,r\right)  $ can be derived from (\ref{8}).

Now, we attend to the connection of $w_{n}^{\left(  s\right)  }\left(
x;\alpha,\beta,r\right)  $ with some degenerate special polynomials.

If we take $x=-1/2$ in (\ref{8}) for $\beta=1$ and compare with (\ref{9}), we
have%
\[
w_{n}^{\left(  s\right)  }\left(  \frac{-1}{2};\alpha,1,r\right)
=\mathcal{E}_{n}^{\left(  s\right)  }\left(  \alpha;r\right)  .
\]
Thus, using (\ref{3}) yields an explicit formula for higher order degenerate
Euler polynomials in the following corollary.

\begin{corollary}
For every $s\geq0,$ we have%
\begin{equation}
\mathcal{E}_{n}^{\left(  s+1\right)  }\left(  \alpha;r\right)  =\sum_{k=0}%
^{n}S_{2}\left(  n,k,r\mid\alpha\right)  \frac{\left(  -1\right)
^{k}\left\langle s+1\right\rangle _{k}}{2^{k}}. \label{10}%
\end{equation}
When $s=0$, this becomes%
\begin{equation}
\mathcal{E}_{n}\left(  \alpha;r\right)  =\sum_{k=0}^{n}S_{2}\left(
n,k,r\mid\alpha\right)  \frac{\left(  -1\right)  ^{k}k!}{2^{k}}. \label{27}%
\end{equation}

\end{corollary}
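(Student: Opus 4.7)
The approach is the one the authors sketch just before the statement: specialize the generating function (\ref{8}) to $\beta=1$, $x=-1/2$, match it with (\ref{9}), then read off the coefficient expression from the explicit formula (\ref{3}). The entire proof amounts to parameter matching plus two small algebraic identities.

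First I would simplify the factor $1 - x\bigl((1+\alpha t)^{\beta/\alpha}-1\bigr)$ under these specializations; it collapses to $\tfrac{1}{2}\bigl((1+\alpha t)^{1/\alpha}+1\bigr)$, so (\ref{8}) becomes
\[
\sum_{n=0}^{\infty} w_n^{(s)}(-1/2;\alpha,1,r)\,\frac{t^n}{n!} = \left(\frac{2}{(1+\alpha t)^{1/\alpha}+1}\right)^{\!s}(1+\alpha t)^{r/\alpha}.
\]
This is precisely the generating function (\ref{9}) for $\mathcal{E}_n^{(s)}(\alpha;r)$ with the dummy argument relabelled, so comparing coefficients of $t^n/n!$ yields $w_n^{(s)}(-1/2;\alpha,1,r) = \mathcal{E}_n^{(s)}(\alpha;r)$.

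Next I would substitute $x=-1/2$, $\beta=1$ directly into (\ref{3}), obtaining
\[
\mathcal{E}_n^{(s+1)}(\alpha;r) = \sum_{k=0}^n S(n,k;\alpha,1,r)\binom{s+k}{k}k!\,\frac{(-1)^k}{2^k}.
\]
Two bookkeeping identities then produce (\ref{10}): the product $\binom{s+k}{k}k!$ equals the rising factorial $\langle s+1\rangle_k$, and item (ii) of the preliminaries identifies $S(n,k;\alpha,1,r)$ with Howard's degenerate weighted Stirling number $S_2(n,k,r\mid\alpha)$. The special case (\ref{27}) then drops out by setting $s=0$ and using $\langle 1\rangle_k = k!$.

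There is no real obstacle in this argument; it is a clean specialization of results already established. The only step needing mild attention is the notational reconciliation in item (ii), where one must keep track of the sign convention on the parameter $r$ when translating the Hsu--Shiue notation $S(n,k;\alpha,\beta,r)$ into Howard's notation $S_2(n,k,r\mid\alpha)$.
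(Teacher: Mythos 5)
Your proposal is correct and follows exactly the route the paper takes: specialize the generating function (\ref{8}) at $\beta=1$, $x=-1/2$, match it against (\ref{9}) to get $w_n^{(s)}(-1/2;\alpha,1,r)=\mathcal{E}_n^{(s)}(\alpha;r)$, and then read off the explicit sum from (\ref{3}) using $\binom{s+k}{k}k!=\langle s+1\rangle_k$ and the identification of $S(n,k;\alpha,1,r)$ with Howard's degenerate weighted Stirling numbers. Your closing remark about the sign convention on $r$ in item (ii) is a reasonable caution, and if anything you are more careful on that point than the paper, which passes over it silently.
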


To the writer's knowledge, this is a new result. (\ref{10}) and (\ref{27}) are
new results.

From (\ref{8}) and (\ref{28}), one can obtain%
\[
\underset{\alpha\rightarrow0}{\lim}w_{n}^{\left(  s\right)  }\left(  \frac
{-1}{2};\alpha,\beta,r\right)  =E_{n}^{\left(  s\right)  }\left(  \frac
{r}{\beta}\right)  \beta^{n}.
\]
Thus, we have
\[
E_{n}^{\left(  s\right)  }\left(  \frac{r}{\beta}\right)  =\sum_{k=0}%
^{n}W_{\beta,r}\left(  n,k\right)  \frac{\left(  -1\right)  ^{k}\left\langle
s\right\rangle _{k}}{\beta^{n-k}2^{k}},
\]
which was given in \cite{Mihioubi2} with a different proof. Moreover, for
$\beta=1$ and $r=0$ the above equation reduce to the right part of (\ref{14}).

Secondly, if we integrate both sides of (\ref{19}) for $x$ from $-1$ to $0,$
we have%
\begin{equation}
\sum_{n=0}^{\infty}\frac{t^{n}}{n!}%
{\displaystyle\int\limits_{-1}^{0}}
w_{n}\left(  x;\alpha,\beta,r\right)  dx=\frac{\frac{\beta}{\alpha}\log\left(
1+\alpha t\right)  }{\left(  1+\alpha t\right)  ^{\beta/\alpha}-1}\left(
1+\alpha t\right)  ^{r/\alpha}. \label{20}%
\end{equation}
In view of (\ref{34}), for $\beta=1,$ the above equation becomes%
\[
\sum_{n=0}^{\infty}\frac{t^{n}}{n!}%
{\displaystyle\int\limits_{-1}^{0}}
w_{n}\left(  x;\alpha,1,r\right)  dx=\sum_{n=0}^{\infty}B_{n}\left(
r\mid\alpha\right)  \frac{t^{n}}{n!}.
\]
Comparing the coefficients of $\frac{t^{n}}{n!}$ gives
\[%
{\displaystyle\int\limits_{-1}^{0}}
w_{n}\left(  x;\alpha,1,r\right)  dx=B_{n}\left(  r\mid\alpha\right)  .
\]
Finally, using (\ref{11}) yields the following theorem.

\begin{theorem}
The following equation holds for degenerate Bernoulli polynomials of the
second kind%
\[
B_{n}\left(  r\mid\alpha\right)  =\sum_{k=0}^{n}S_{2}\left(  n,k,r\mid
\alpha\right)  \frac{\left(  -1\right)  ^{k}k!}{k+1}.
\]

\end{theorem}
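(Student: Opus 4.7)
The plan is to exploit the fact that most of the work has already been carried out in the paragraph preceding the statement: the generating function identity (\ref{20}) already encodes the relation between $w_n(x;\alpha,1,r)$ integrated over $[-1,0]$ and the degenerate Bernoulli polynomials of the second kind. What remains is to convert this integral identity into the stated closed form by inserting the explicit representation (\ref{11}) and integrating term by term.

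More concretely, I would proceed in three steps. First, start from the generating function (\ref{19}) for $w_n(x;\alpha,\beta,r)$ and integrate both sides in $x$ from $-1$ to $0$. Under the substitution $u=1-x[(1+\alpha t)^{\beta/\alpha}-1]$ the right side evaluates cleanly to $\frac{(\beta/\alpha)\log(1+\alpha t)}{(1+\alpha t)^{\beta/\alpha}-1}(1+\alpha t)^{r/\alpha}$, which is (\ref{20}). Second, specialize $\beta=1$ and compare with the generating function (\ref{34}); reading off the coefficient of $t^n/n!$ yields
\[
\int_{-1}^{0}w_n(x;\alpha,1,r)\,dx=B_n(r\mid\alpha).
\]
Third, substitute the explicit formula (\ref{11}) into the left side, interchange the sum with the integral, and use the elementary evaluation $\int_{-1}^{0}x^k\,dx=(-1)^k/(k+1)$. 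Identifying $S(n,k;\alpha,1,r)$ with $S_2(n,k,r\mid\alpha)$ via item~ii of the preliminaries then produces the claimed identity.

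The only step requiring any care is the last one, namely the bookkeeping between the two Stirling-number conventions used in the paper (the four-parameter $S(n,k;\alpha,\beta,r)$ versus Howard's $S_2(n,k,r\mid\alpha)$), and the sign handling in $\int_{-1}^{0}x^k\,dx$. Neither is a substantive obstacle: it is a direct matching of definitions followed by a one-line computation. There is no analytic or combinatorial difficulty to overcome, since the generating function manipulation has already been performed in the body of the text; the theorem is really a statement of what the coefficient-comparison plus the explicit formula give together.
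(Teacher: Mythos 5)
Your proposal is correct and follows essentially the same route as the paper: integrate the generating function (\ref{19}) over $x\in[-1,0]$ to obtain (\ref{20}), specialize $\beta=1$ and match against (\ref{34}) to get $\int_{-1}^{0}w_{n}(x;\alpha,1,r)\,dx=B_{n}(r\mid\alpha)$, then insert (\ref{11}) and use $\int_{-1}^{0}x^{k}\,dx=(-1)^{k}/(k+1)$. The only point you flag, the matching of conventions between $S(n,k;\alpha,1,r)$ and Howard's $S_{2}(n,k,r\mid\alpha)$, is handled by the paper in exactly the same implicit way.
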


We note that for $\alpha\rightarrow0,$ (\ref{20}) can be written as
\begin{align*}
\underset{\alpha\rightarrow0}{\lim}\sum_{n=0}^{\infty}\frac{t^{n}}{n!}%
{\displaystyle\int\limits_{-1}^{0}}
w_{n}\left(  x;\alpha,\beta,r\right)  dx  &  =\frac{\beta t}{e^{\beta t}%
-1}e^{rt}\\
&  =\sum_{n=0}^{\infty}B_{n}\left(  \frac{r}{\beta}\right)  \beta^{n}%
\frac{t^{n}}{n!}.
\end{align*}
Comparing the coefficients of $\frac{t^{n}}{n!}$ in the above equation, we
obtain%
\[
\underset{\alpha\rightarrow0}{\lim}%
{\displaystyle\int\limits_{-1}^{0}}
w_{n}\left(  x;\alpha,\beta,r\right)  dx=B_{n}\left(  \frac{r}{\beta}\right)
\beta^{n}.
\]
Since $S_{2}\left(  n,k;0,\beta,r\right)  =W_{\beta,r}\left(  n,k\right)  $ we
find
\[
B_{n}\left(  \frac{r}{\beta}\right)  =\sum_{k=0}^{n}W_{\beta,r}\left(
n,k\right)  \frac{\left(  -1\right)  ^{k}k!}{\beta^{n-k}\left(  k+1\right)
},
\]
which was also given in \cite{Mihioubi2} with a different proof. Moreover, for
$\beta=1$ and $r=0$ the above equation reduce to the left part of (\ref{14}).

Now, we give a new explicit formula for Carlitz's degenerate Bernoulli
polynomials in the following theorem.

\begin{theorem}
For every $s\geq0,$
\begin{equation}
\beta_{n+1}\left(  \alpha,r\right)  -\beta_{n+1}\left(  \alpha,r-s\right)
=\left(  n+1\right)  \sum_{k=0}^{n}S_{2}\left(  n,k,r\mid\alpha\right)
\frac{\left(  -1\right)  ^{k}\left\langle s\right\rangle _{k+1}}{k+1}.
\label{29}%
\end{equation}

\end{theorem}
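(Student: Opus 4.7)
The plan is to extend the proof strategy used for the preceding theorem (the closed form for $B_{n}(r\mid\alpha)$) to the higher-order setting: integrate (\ref{8}) in the variable $x$ over $[-1,0]$ with $\beta=1$, and compare the resulting series with the generating function of Carlitz's degenerate Bernoulli polynomials.

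I would compute $\int_{-1}^{0} w_{n}^{(s+1)}(x;\alpha,1,r)\,dx$ in two different ways. On the one hand, termwise integration of the explicit formula (\ref{3}) with $\beta=1$, using $\int_{-1}^{0} x^{k}\,dx=(-1)^{k}/(k+1)$, gives
\[
\int_{-1}^{0} w_{n}^{(s+1)}(x;\alpha,1,r)\,dx=\sum_{k=0}^{n}S_{2}(n,k,r\mid\alpha)\,\frac{(-1)^{k}\langle s+1\rangle_{k}}{k+1}.
\]
On the other hand, integrating the generating function (\ref{8}) termwise in $x$, via the substitution $u=1-x\bigl((1+\alpha t)^{1/\alpha}-1\bigr)$, reduces to an elementary power integral and yields
\[
\sum_{n=0}^{\infty}\!\left[\int_{-1}^{0} w_{n}^{(s+1)}(x;\alpha,1,r)\,dx\right]\!\frac{t^{n}}{n!}=\frac{(1+\alpha t)^{r/\alpha}-(1+\alpha t)^{(r-s)/\alpha}}{s\bigl((1+\alpha t)^{1/\alpha}-1\bigr)}.
\]

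Multiplying this identity by $t$ turns the right-hand side, by the definition of $\beta_{n}(\alpha,x)$, into $\tfrac{1}{s}\sum_{n\geq 0}\bigl(\beta_{n}(\alpha,r)-\beta_{n}(\alpha,r-s)\bigr)t^{n}/n!$. Reindexing the left-hand side ($t\cdot t^{n}/n!=(n+1)\,t^{n+1}/(n+1)!$) and matching the coefficient of $t^{n+1}/(n+1)!$ produces the closed-form evaluation
\[
s(n+1)\int_{-1}^{0} w_{n}^{(s+1)}(x;\alpha,1,r)\,dx=\beta_{n+1}(\alpha,r)-\beta_{n+1}(\alpha,r-s).
\]
Equating this with the first evaluation of the integral and using the identity $\langle s\rangle_{k+1}=s\langle s+1\rangle_{k}$ to absorb the factor of $s$ into the rising factorial delivers exactly (\ref{29}).

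The one delicate point is the index shift introduced by the extra factor of $t$: one must compare coefficients at $t^{n+1}/(n+1)!$ rather than at $t^{n}/n!$, which is the only reason the final formula features $(n+1)$ and $\beta_{n+1}$ rather than $\beta_{n}$. Everything else — the elementary $x$-integration, the interchange of sum and integral, and the rising-factorial identity — is routine bookkeeping parallel to the previous theorem.
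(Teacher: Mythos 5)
Your proposal is correct and follows essentially the same route as the paper: integrate the generating function (\ref{8}) with $\beta=1$ over $x\in[-1,0]$, recognize the result as $\tfrac{1}{s}$ times a difference of Carlitz generating functions, equate coefficients to evaluate $\int_{-1}^{0}w_{n}^{(s+1)}(x;\alpha,1,r)\,dx$, and then compute the same integral termwise from the explicit formula (\ref{3}). Your write-up is in fact a bit more careful than the paper's about the index shift caused by the extra factor of $t$ and about the identity $\left\langle s\right\rangle _{k+1}=s\left\langle s+1\right\rangle _{k}$, which the paper leaves implicit.
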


\begin{proof}
If we integrate both sides of (\ref{8}) for $x$ from $-1$ to $0,$ we have%
\begin{equation}
\sum_{n=0}^{\infty}\frac{t^{n}}{n!}%
{\displaystyle\int\limits_{-1}^{0}}
w_{n}^{\left(  s+1\right)  }\left(  x;\alpha,\beta,r\right)  dx=\frac{1}%
{st}\left[  \frac{t\left(  1+\alpha t\right)  ^{r/\alpha}}{\left(  1+\alpha
t\right)  ^{\beta/\alpha}-1}-\frac{t\left(  1+\alpha t\right)  ^{\left(
r-\beta s\right)  /\alpha}}{\left(  1+\alpha t\right)  ^{\beta/\alpha}%
-1}\right]  . \label{13}%
\end{equation}
For $\beta=1$, the above equation becomes%
\[
\sum_{n=0}^{\infty}\frac{t^{n}}{n!}%
{\displaystyle\int\limits_{-1}^{0}}
w_{n}^{\left(  s+1\right)  }\left(  x;\alpha,1,r\right)  dx=\frac{1}{s}\left[
\sum_{n=0}^{\infty}\frac{\beta_{n+1}\left(  \alpha,r\right)  -\beta
_{n+1}\left(  \alpha,r-s\right)  }{n+1}\right]  \frac{t^{n}}{n!}.
\]
Equating the coefficients of $\frac{t^{n}}{n!}$ in the above equation, we
obtain%
\[%
{\displaystyle\int\limits_{-1}^{0}}
w_{n}^{\left(  s+1\right)  }\left(  x;\alpha,1,r\right)  dx=\frac{\beta
_{n+1}\left(  \alpha,r\right)  -\beta_{n+1}\left(  \alpha,r-s\right)
}{s\left(  n+1\right)  }.
\]
Finally, using (\ref{3}) yields the desired equation.
\end{proof}

We note that for $s=1,$ (\ref{29}) becomes
\[
\beta_{n+1}\left(  \alpha,r\right)  -\beta_{n+1}\left(  \alpha,r-1\right)
=\left(  n+1\right)  \left(  r-1\mid\alpha\right)  _{n},
\]
which is extension of well-known identity for Bernoulli polynomials%
\[
B_{n+1}\left(  r\right)  -B_{n+1}\left(  r-1\right)  =\left(  n+1\right)
\left(  r-1\right)  ^{n}.
\]

Taking $s=r$ in (\ref{29}), we have
\[
\beta_{n+1}\left(  \alpha,r\right)  =\beta_{n+1}\left(  \alpha\right)
+\left(  n+1\right)  \sum_{k=0}^{n}S_{2}\left(  n,k,r\mid\alpha\right)
\frac{\left(  -1\right)  ^{k}\left\langle r\right\rangle _{k+1}}{k+1}.
\]
On the other hand, taking $s=r$ in (\ref{29}) and using the relation \cite[Eq.
(5.4)]{Carlitz}%
\[
\sum_{j=0}^{r-1}\left(  j\mid\alpha\right)  _{m}=\frac{1}{m+1}\left[
\beta_{m+1}\left(  \alpha,r\right)  -\beta_{m+1}\left(  \alpha\right)
\right]  ,
\]
we derive the following corollary.

\begin{corollary}
The sums of generalized falling factorials can be expressed as%
\[
\sum_{j=0}^{r-1}\left(  j\mid\alpha\right)  _{n}=\sum_{k=0}^{n}S_{2}\left(
n,k,r\mid\alpha\right)  \frac{\left(  -1\right)  ^{k}\left\langle
r\right\rangle _{k+1}}{k+1},
\]
where $r$ is any integer $>0.$
\end{corollary}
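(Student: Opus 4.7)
The plan is to combine the freshly proved Theorem 5 (equation (\ref{29})) with the classical Carlitz finite-sum identity \cite[Eq. (5.4)]{Carlitz} quoted immediately before the corollary. Both ingredients are already on the table, so the whole argument should reduce to a one-line specialization followed by a substitution, with the factor $n+1$ arranging itself to cancel.

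First, I would specialize (\ref{29}) by setting $s = r$. Since $r - s = 0$ and the convention stated at the end of Section 2 gives $\beta_{n+1}(\alpha, 0) = \beta_{n+1}(\alpha)$, this specialization reads
$$\beta_{n+1}(\alpha, r) - \beta_{n+1}(\alpha) = (n+1)\sum_{k=0}^{n} S_{2}(n, k, r\mid\alpha)\,\frac{(-1)^{k}\langle r\rangle_{k+1}}{k+1}.$$
Next, I would invoke Carlitz's identity with $m = n$, namely
$$\sum_{j=0}^{r-1}(j\mid\alpha)_{n} = \frac{1}{n+1}\bigl[\beta_{n+1}(\alpha, r) - \beta_{n+1}(\alpha)\bigr].$$
Substituting the first display into the right-hand side of the second, the prefactor $n+1$ cancels the $1/(n+1)$ and the claimed identity drops out with no further manipulation.

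I do not foresee any real obstacle here; the corollary is the direct child of Theorem 5 once Carlitz's formula is invoked. The only points requiring a line of attention are bookkeeping in nature: confirming that the hypothesis $r \in \mathbb{Z}_{>0}$ is needed precisely so that the finite sum $\sum_{j=0}^{r-1}(j\mid\alpha)_n$ on the left and Carlitz's formula itself are well defined, and unwinding the convention $\beta_{n+1}(\alpha, 0) = \beta_{n+1}(\alpha)$ used when setting $s = r$. With these observations in place the derivation is purely algebraic and essentially a substitution.
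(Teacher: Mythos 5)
Your proposal is correct and matches the paper's own derivation exactly: the paper also sets $s=r$ in (\ref{29}), uses $\beta_{n+1}(\alpha,0)=\beta_{n+1}(\alpha)$, and then cancels the factor $n+1$ against Carlitz's identity $\sum_{j=0}^{r-1}(j\mid\alpha)_{m}=\frac{1}{m+1}[\beta_{m+1}(\alpha,r)-\beta_{m+1}(\alpha)]$ with $m=n$. Nothing further is needed.
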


Setting $s=\alpha$ in (\ref{29}) and using the identity \cite[Eq.
(5.10)]{Carlitz}%
\[
\beta_{n}\left(  \alpha,z+\alpha\right)  =\beta_{n}\left(  \alpha,z\right)
+\alpha n\beta_{n-1}\left(  \alpha,z\right)  ,\text{ \ }\alpha>0,
\]
gives the following corollary.

\begin{corollary}
For every $\alpha\geq0,$%
\begin{equation}
\beta_{n}\left(  \alpha,r-\alpha\right)  =\sum_{k=0}^{n}S_{2}\left(
n,k,r\mid\alpha\right)  \frac{\left(  -1\right)  ^{k}\left\langle
\alpha+1\right\rangle _{k}}{k+1}. \label{31}%
\end{equation}

\end{corollary}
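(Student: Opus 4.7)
The plan is to follow the hint verbatim: substitute $s=\alpha$ into equation (\ref{29}), then use Carlitz's identity $\beta_n(\alpha,z+\alpha)=\beta_n(\alpha,z)+\alpha n\beta_{n-1}(\alpha,z)$ to reduce the difference $\beta_{n+1}(\alpha,r)-\beta_{n+1}(\alpha,r-\alpha)$ on the left-hand side of (\ref{29}) to a single $\beta_n$, and finally simplify the rising-factorial factor on the right.

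First I would write out (\ref{29}) with $s=\alpha$:
\[
\beta_{n+1}(\alpha,r)-\beta_{n+1}(\alpha,r-\alpha)=(n+1)\sum_{k=0}^{n}S_{2}(n,k,r\mid\alpha)\frac{(-1)^{k}\langle\alpha\rangle_{k+1}}{k+1}.
\]
Applying the Carlitz identity with $z=r-\alpha$ gives $\beta_{n+1}(\alpha,r)-\beta_{n+1}(\alpha,r-\alpha)=\alpha(n+1)\beta_{n}(\alpha,r-\alpha)$, so the left-hand side collapses to a multiple of the polynomial we want.

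Next I would divide through by $\alpha(n+1)$ (valid for $\alpha>0$; the case $\alpha=0$ follows since both sides are polynomials in $\alpha$) and use the elementary factorization
\[
\langle\alpha\rangle_{k+1}=\alpha(\alpha+1)(\alpha+2)\cdots(\alpha+k)=\alpha\,\langle\alpha+1\rangle_{k},
\]
so that $\langle\alpha\rangle_{k+1}/\alpha=\langle\alpha+1\rangle_{k}$. This cancels the stray $\alpha$ and yields exactly (\ref{31}).

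There is no serious obstacle here: the entire argument is an algebraic rearrangement of (\ref{29}) combined with one functional equation from Carlitz. The only mild subtlety is the division by $\alpha$ — one should either restrict to $\alpha>0$ and then extend to $\alpha=0$ by the polynomial nature of $\beta_{n}$, $\langle\alpha+1\rangle_{k}$, and $S_{2}(n,k,r\mid\alpha)$ in $\alpha$, or equivalently rewrite $\langle\alpha\rangle_{k+1}=\alpha\langle\alpha+1\rangle_{k}$ first so that the factor $\alpha$ cancels symbolically before any division is performed.
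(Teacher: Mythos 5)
Your proposal is correct and follows exactly the route the paper intends: substitute $s=\alpha$ into (\ref{29}), collapse the left-hand side via Carlitz's identity $\beta_{n+1}(\alpha,r)-\beta_{n+1}(\alpha,r-\alpha)=\alpha(n+1)\beta_{n}(\alpha,r-\alpha)$, and cancel the factor $\alpha$ using $\langle\alpha\rangle_{k+1}=\alpha\langle\alpha+1\rangle_{k}$. Your remark on handling the division by $\alpha$ (restricting to $\alpha>0$ or cancelling symbolically first) is a welcome extra care that the paper glosses over.
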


The identity given in \cite{Mihioubi2} can also be derived form (\ref{31}) for
$\alpha\rightarrow0.$ Moreover, when $r=0,$ (\ref{31}) becomes
\begin{equation}
\beta_{n}\left(  \alpha,-\alpha\right)  =\sum_{k=0}^{n}S_{2}\left(
n,k\mid\alpha\right)  \frac{\left(  -1\right)  ^{k}\left\langle \alpha
+1\right\rangle _{k}}{k+1}. \label{32}%
\end{equation}
Taking $r=\alpha$ in (\ref{31}) we have
\begin{equation}
\beta_{n}\left(  \alpha\right)  =\sum_{k=0}^{n}S_{2}\left(  n,k,\alpha
\mid\alpha\right)  \frac{\left(  -1\right)  ^{k}\left\langle \alpha
+1\right\rangle _{k}}{k+1}. \label{33}%
\end{equation}

Let us return to (\ref{13}) again. For $\alpha\rightarrow0,$ (\ref{13}) can be
written as%
\begin{align*}
\underset{\alpha\rightarrow0}{\lim}\sum_{n=0}^{\infty}\frac{t^{n}}{n!}%
{\displaystyle\int\limits_{-1}^{0}}
w_{n}^{\left(  s+1\right)  }\left(  x;\alpha,\beta,r\right)  dx  &  =\frac
{1}{st}\left[  \frac{te^{rt}}{e^{\beta t}-1}-\frac{te^{\left(  r-\beta
s\right)  t}}{e^{\beta t}-1}\right] \\
&  =\frac{1}{s}\left[  \sum_{n=0}^{\infty}\frac{B_{n+1}\left(  \frac{r}{\beta
}\right)  -B_{n+1}\left(  \frac{r}{\beta}-s\right)  }{n+1}\beta^{n+1}\right]
\frac{t^{n}}{n!}.
\end{align*}
Since $S_{2}\left(  n,k;0,\beta,r\right)  =W_{\beta,r}\left(  n,k\right)  ,$
we derive the values of Bernoulli polynomials at rational numbers in the
following theorem.

\begin{theorem}
\label{teo1}For every $s\geq0$ and $\beta\neq0,$ we have%
\begin{equation}
B_{n+1}\left(  \frac{r}{\beta}\right)  -B_{n+1}\left(  \frac{r}{\beta
}-s\right)  =\left(  n+1\right)  \sum_{k=0}^{n}W_{\beta,r}\left(  n,k\right)
\frac{\left(  -1\right)  ^{k}\left\langle s\right\rangle _{k+1}}{\beta
^{n+1-k}\left(  k+1\right)  }. \label{37}%
\end{equation}

\end{theorem}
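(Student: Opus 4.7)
The plan is to pass to the limit $\alpha\to 0$ in equation (\ref{13})---which was the key step in the proof of the previous theorem---and then equate coefficients of $t^{n}/n!$ on both sides. Since $(1+\alpha t)^{a/\alpha}\to e^{at}$ as $\alpha\to 0$ for any constant $a$, the right-hand side of (\ref{13}) converges to
$$
\frac{1}{st}\left[\frac{t e^{rt}}{e^{\beta t}-1}-\frac{t e^{(r-\beta s)t}}{e^{\beta t}-1}\right],
$$
while the left-hand side tends to $\sum_{n\ge 0}\frac{t^{n}}{n!}\int_{-1}^{0}\lim_{\alpha\to 0}w_{n}^{(s+1)}(x;\alpha,\beta,r)\,dx$, since the integrand is a polynomial in $x$ whose coefficients depend continuously on $\alpha$.

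For the right-hand side I would expand via the Bernoulli polynomial generating function. Substituting $u=\beta t$ and $x=r/\beta$ in $ue^{xu}/(e^{u}-1)=\sum_{n\ge 0}B_{n}(x)u^{n}/n!$ yields $\beta t\,e^{rt}/(e^{\beta t}-1)=\sum_{n}B_{n}(r/\beta)\beta^{n}t^{n}/n!$, and similarly with $r/\beta$ replaced by $r/\beta-s$. Dividing by $\beta$ and by $st$, shifting the summation index by one (the $n=0$ terms cancel because $B_{0}(r/\beta)-B_{0}(r/\beta-s)=0$), and subtracting then gives a single power series in $t/n!$ whose $n$-th coefficient is a scalar multiple of $B_{n+1}(r/\beta)-B_{n+1}(r/\beta-s)$ times an explicit power of $\beta$.

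For the left-hand side I would use the explicit formula (\ref{3}) together with the identification $S(n,k;0,\beta,r)=W_{\beta,r}(n,k)$ from item~vi of Section~2 to write
$$
\lim_{\alpha\to 0}w_{n}^{(s+1)}(x;\alpha,\beta,r)=\sum_{k=0}^{n}W_{\beta,r}(n,k)\binom{s+k}{k}k!\,\beta^{k}x^{k},
$$
and integrate termwise using $\int_{-1}^{0}x^{k}\,dx=(-1)^{k}/(k+1)$. Equating the two coefficients of $t^{n}/n!$, invoking $\binom{s+k}{k}k!=\langle s+1\rangle_{k}$ together with $\langle s+1\rangle_{k}=\langle s\rangle_{k+1}/s$ to cancel the common factor of $1/s$, and finally multiplying through by $n+1$ gives the stated identity.

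No novel idea beyond the pattern already used for Theorem~5 is required; the main obstacle is purely arithmetic bookkeeping. One must track the powers of $\beta$ carefully through three separate rescalings---the substitution $u=\beta t$, the single division by $\beta$ inside $te^{rt}/(e^{\beta t}-1)$, and the factor $\beta^{k}$ coming from (\ref{3})---so that the resulting denominator $\beta^{n+1-k}$ in the final formula assembles correctly alongside the $k+1$ from the integral and the factor $n+1$ from the index shift.
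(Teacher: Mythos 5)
Your route is exactly the paper's: let $\alpha\to 0$ in (\ref{13}), expand the right-hand side via the Bernoulli generating function, integrate the explicit formula (\ref{3}) termwise over $[-1,0]$ using $S(n,k;0,\beta,r)=W_{\beta,r}(n,k)$, and compare coefficients of $t^{n}/n!$; every step you list is the one the authors perform. The one claim you do not actually verify is the one that fails: the $\beta$-bookkeeping does not assemble to $\beta^{n+1-k}$. Since $\dfrac{te^{rt}}{e^{\beta t}-1}=\sum_{m\ge 0}B_{m}\!\left(\tfrac{r}{\beta}\right)\beta^{m-1}\dfrac{t^{m}}{m!}$, after subtracting the companion term, dividing by $st$ and shifting the index, the coefficient of $t^{n}/n!$ on the right is $\dfrac{1}{s(n+1)}\bigl[B_{n+1}\!\left(\tfrac{r}{\beta}\right)-B_{n+1}\!\left(\tfrac{r}{\beta}-s\right)\bigr]\beta^{n}$, not $\beta^{n+1}$ as in the paper's intermediate display; equating this with $\sum_{k}W_{\beta,r}(n,k)\langle s+1\rangle_{k}\beta^{k}\tfrac{(-1)^{k}}{k+1}$ and cancelling $1/s$ gives the denominator $\beta^{n-k}$, not $\beta^{n+1-k}$. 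The case $n=0$ confirms this: the printed identity reduces to $s=s/\beta$, which is false for $\beta\neq 1$, while the exponent $n-k$ gives $s=s$. So your method is sound and identical to the paper's, but carried out honestly it proves a corrected statement; the exponent in (\ref{37}) (and, downstream, the $\beta^{k-1}$ in the corollary on $\sum_{j=0}^{m-1}(r+\beta j)^{n}$, which should be $\beta^{k}$) is off by one power of $\beta$, and you should not certify that the stated denominator "assembles correctly."
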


We note that for $\beta=1$ and $r=0$ in (\ref{37}) we have
\[
B_{n+1}\left(  -s\right)  =B_{n+1}+\left(  n+1\right)  \sum_{k=0}^{n}%
\genfrac{\{}{\}}{0pt}{}{n}{k}%
\frac{\left(  -1\right)  ^{k+1}\left\langle s\right\rangle _{k+1}}{k+1}.
\]
Replacing $-s$ with $x$ in the above equation and using (\ref{36}) gives the
well-known relation between Bernoulli polynomials and falling factorial as
\cite[Eq. (15.39)]{Gould}\textbf{ }%
\[
B_{n+1}\left(  x\right)  =B_{n+1}+\sum_{k=0}^{n}\frac{\left(  n+1\right)
}{k+1}%
\genfrac{\{}{\}}{0pt}{}{n}{k}%
\left(  x\right)  _{k+1}.
\]

For $\beta=1$ in (\ref{37}) we have%
\[
B_{n+1}\left(  r\right)  -B_{n+1}\left(  r-s\right)  =\sum_{k=0}^{n}%
\frac{\left(  -1\right)  ^{k}\left(  n+1\right)  }{k+1}%
\genfrac{\{}{\}}{0pt}{}{n+r}{k+r}%
_{r}\left\langle s\right\rangle _{k+1}.
\]
Thus, taking $s=r$ in the above equation gives a new relation between
Bernoulli polynomials and rising factorial in the following corollary.

\begin{corollary}
For every $r\geq0,$ we have
\[
B_{n+1}\left(  r\right)  =B_{n+1}+\sum_{k=0}^{n}\frac{\left(  -1\right)
^{k}\left(  n+1\right)  }{k+1}%
\genfrac{\{}{\}}{0pt}{}{n+r}{k+r}%
_{r}\left\langle r\right\rangle _{k+1}.
\]

\end{corollary}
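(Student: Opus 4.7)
The plan is to specialize the identity derived just before the corollary, namely the $\beta=1$ case of Theorem \ref{teo1}, which reads
\[
B_{n+1}(r)-B_{n+1}(r-s)=\sum_{k=0}^{n}\frac{(-1)^{k}(n+1)}{k+1}\genfrac{\{}{\}}{0pt}{}{n+r}{k+r}_{r}\langle s\rangle_{k+1}.
\]
I would simply set $s=r$ on both sides. The left-hand side then collapses to $B_{n+1}(r)-B_{n+1}(0)$, and using the standard fact $B_{n+1}(0)=B_{n+1}$ (the $n$-th Bernoulli number, recorded in the Preliminaries), this becomes $B_{n+1}(r)-B_{n+1}$. Transposing $B_{n+1}$ to the right-hand side produces the desired formula verbatim.

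The only thing worth verifying along the way is that the $r$-Stirling indexing $\genfrac{\{}{\}}{0pt}{}{n+r}{k+r}_{r}$ is consistent on both sides: since the identity from Theorem \ref{teo1} is stated for a fixed parameter $r$ and varies only in $s$, the substitution $s=r$ leaves the $r$-Stirling factor untouched, so no re-indexing is needed. Likewise the rising factorial $\langle s\rangle_{k+1}$ becomes $\langle r\rangle_{k+1}$ exactly as written in the statement.

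There is essentially no obstacle here; the corollary is a direct specialization and the work has already been done in proving Theorem \ref{teo1}. The only minor point to be careful about is the assumption $r\geq 0$ in the corollary statement, which ensures the $r$-Stirling numbers $\genfrac{\{}{\}}{0pt}{}{n+r}{k+r}_{r}$ are defined in the usual combinatorial sense; the algebraic identity itself is valid for any real or complex $r$ in view of the generating-function derivation of Theorem \ref{teo1}.
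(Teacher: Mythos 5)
Your proposal is correct and follows exactly the paper's own route: the corollary is obtained by setting $\beta=1$ in Theorem \ref{teo1} (equation (\ref{37})) and then taking $s=r$, so that $B_{n+1}(r-s)$ becomes $B_{n+1}(0)=B_{n+1}$. Nothing further is needed.
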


For the last consequences of Theorem \ref{teo1}, we deal with the Howard's
identity \cite[Eq. (4.3)]{Howard2}%
\[
\sum_{j=0}^{m-1}\left(  r+\beta j\right)  ^{n}=\frac{\beta^{n}}{n+1}\left[
B_{n+1}\left(  m+\frac{r}{\beta}\right)  -B_{n+1}\left(  \frac{r}{\beta
}\right)  \right]  ,
\]
Replacing $-s$ with $m$ in (\ref{37}) and using (\ref{36}) gives that the sums
of powers of integers can be evaluated as in the following corollary.

\begin{corollary}
Let $n$ and $m$ be non-negative integers with $m>0$ and $\beta\neq0.$ Then, we
have%
\[
\sum_{j=0}^{m-1}\left(  r+\beta j\right)  ^{n}=\sum_{k=0}^{n}\frac{\beta
^{k-1}}{k+1}W_{\beta,r}\left(  n,k\right)  \left(  m\right)  _{k+1}.
\]

\end{corollary}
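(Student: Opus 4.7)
The plan is to combine Howard's identity with Theorem \ref{teo1} by a careful choice of parameter. Howard's identity gives the sum of powers of integers in arithmetic progression as a difference of Bernoulli polynomials, and Theorem \ref{teo1} expresses exactly such a difference in terms of the $r$-Whitney numbers $W_{\beta,r}(n,k)$ and the rising factorial $\langle s\rangle_{k+1}$. So the proof reduces to matching the two expressions.

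Concretely, I would substitute $s=-m$ into (\ref{37}). On the left this produces
\[
B_{n+1}\!\left(\tfrac{r}{\beta}\right)-B_{n+1}\!\left(\tfrac{r}{\beta}+m\right),
\]
and on the right the factor $\langle -m\rangle_{k+1}$ appears. The key algebraic step is to invoke relation (\ref{36}), namely $\langle -m\rangle_{k+1}=(-1)^{k+1}(m)_{k+1}$, so that the sign $(-1)^{k}$ already present in (\ref{37}) combines with $(-1)^{k+1}$ to give an overall $-1$. After flipping the sign of the Bernoulli difference, one obtains
\[
B_{n+1}\!\left(m+\tfrac{r}{\beta}\right)-B_{n+1}\!\left(\tfrac{r}{\beta}\right)=(n+1)\sum_{k=0}^{n}W_{\beta,r}(n,k)\,\frac{(m)_{k+1}}{\beta^{n+1-k}(k+1)}.
\]

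Substituting this into Howard's identity
\[
\sum_{j=0}^{m-1}(r+\beta j)^{n}=\frac{\beta^{n}}{n+1}\Bigl[B_{n+1}\!\left(m+\tfrac{r}{\beta}\right)-B_{n+1}\!\left(\tfrac{r}{\beta}\right)\Bigr]
\]
causes the factor $(n+1)$ to cancel, and collecting the powers of $\beta$ gives $\beta^{n}/\beta^{n+1-k}=\beta^{k-1}$, which is precisely the power appearing in the claimed formula.

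The only real pitfall is bookkeeping of signs and of which argument of $B_{n+1}$ is subtracted from which; once $s=-m$ is chosen, (\ref{36}) removes all the $(-1)^{k}$ factors in one stroke, and the rest is a routine simplification of the $\beta$-exponent. No new identities for $W_{\beta,r}(n,k)$ or for Bernoulli polynomials are needed beyond those already cited in the excerpt.
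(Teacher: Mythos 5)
Your proposal is correct and follows exactly the paper's route: the paper also obtains the corollary by replacing $s$ with $-m$ in (\ref{37}), applying (\ref{36}) to turn $\langle -m\rangle_{k+1}$ into $(-1)^{k+1}(m)_{k+1}$, and then substituting the resulting Bernoulli-polynomial difference into Howard's identity. Your sign bookkeeping and the simplification $\beta^{n}/\beta^{n+1-k}=\beta^{k-1}$ are both accurate.
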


\section{Some examples of series and integrals evaluation}

In this section, several examples for the evaluation of some series and
integrals are given.

For the first example, let us take
\[
f\left(  x\right)  =\cosh\left(  x/\beta\right)  =\sum_{k=0}^{\infty}%
\frac{x^{2k}}{\beta^{2k}\left(  2k\right)  !},
\]
in (\ref{1}). Then, we have%
\begin{align*}
\left(  \beta x^{1-\alpha/\beta}D\right)  ^{n}\left[  x^{r/\beta}\left(
e^{x/\beta}+e^{-x/\beta}\right)  \right]   &  =2\sum_{k=0}^{\infty}\frac
{1}{\beta^{2k}\left(  2k\right)  !}\left(  \beta x^{1-\alpha/\beta}D\right)
^{n}\left[  x^{\left(  2k\beta+r\right)  /\beta}\right] \\
&  =2x^{\left(  r-n\alpha\right)  /\beta}\sum_{k=0}^{\infty}\frac{\left(
2k\beta+r\mid\alpha\right)  _{n}}{\beta^{2k}\left(  2k\right)  !}x^{2k}.
\end{align*}
From (\ref{15}), the left hand side can be written as
\begin{align*}
&  \left(  \beta x^{1-\alpha/\beta}D\right)  ^{n}\left[  x^{r/\beta}\left(
e^{x/\beta}+e^{-x/\beta}\right)  \right] \\
&  ^{\qquad\qquad\qquad\qquad\qquad\qquad\qquad}=\left(  \beta x^{1-\alpha
/\beta}D\right)  ^{n}\left[  x^{r/\beta}e^{x/\beta}\right]  +\left(  \beta
x^{1-\alpha/\beta}D\right)  ^{n}\left[  x^{r/\beta}e^{-x/\beta}\right] \\
&  ^{\qquad\qquad\qquad\qquad\qquad\qquad\qquad}=x^{\left(  r-n\alpha\right)
/\beta}e^{x/\beta}S_{n}\left(  x\right)  +x^{\left(  r-n\alpha\right)  /\beta
}e^{-x/\beta}S_{n}\left(  -x\right)  .
\end{align*}
Thus, we have
\[
2\sum_{k=0}^{\infty}\frac{\left(  2k\beta+r\mid\alpha\right)  _{n}}{\beta
^{2k}\left(  2k\right)  !}x^{2k}=e^{x/\beta}S_{n}\left(  x\right)
+e^{-x/\beta}S_{n}\left(  -x\right)  .
\]
Moreover, setting $x=2\pi i\beta$ in the above equation gives%
\begin{equation}
\sum_{k=0}^{\infty}\left(  2k\beta+r\mid\alpha\right)  _{n}\frac{\left(
-1\right)  ^{k}\left(  2\pi\right)  ^{2k}}{\left(  2k\right)  !}=\sum
_{j=1}^{\left\lfloor n/2\right\rfloor }S\left(  n,2j;\alpha,\beta,r\right)
\left(  -1\right)  ^{j}\left(  2\pi\beta\right)  ^{2j}. \label{17}%
\end{equation}

Similarly, taking $f\left(  x\right)  =\sinh x$ in (\ref{1}) gives%
\[
2\sum_{k=0}^{\infty}\frac{\left(  \left(  2k+1\right)  \beta+r\mid
\alpha\right)  _{n}}{\beta^{2k+1}\left(  2k+1\right)  !}x^{2k+1}=e^{x/\beta
}S_{n}\left(  x\right)  -e^{-x/\beta}S_{n}\left(  -x\right)
\]
and%
\begin{equation}
\sum_{k=0}^{\infty}\left(  \left(  2k+1\right)  \beta+r\mid\alpha\right)
_{n}\frac{\left(  -1\right)  ^{k}\left(  2\pi\right)  ^{2k}}{\left(
2k+1\right)  !}=\beta\sum_{j=1}^{\left\lfloor n/2\right\rfloor }S\left(
n,2j+1;\alpha,\beta,r\right)  \left(  -1\right)  ^{j}\left(  2\pi\beta\right)
^{2j}. \label{18}%
\end{equation}

Note that (\ref{17}) and (\ref{18}) are the generalization of the identities
given in \cite[Page 403]{MezoandRamirez}.

Now, if we apply (\ref{1}) to the both sides of the function%
\[
\frac{1}{\left(  1-x\right)  ^{s+1}}=\sum_{k=0}^{\infty}\binom{s+k}{k}x^{k}%
\]
and use (\ref{4}), we obtain (for $n,s=0,1,2,\ldots$)%
\begin{equation}
\sum_{k=0}^{\infty}\binom{s+k}{k}\left(  r+k\beta\mid\alpha\right)  _{n}%
x^{k}=\frac{1}{\left(  1-x\right)  ^{s+1}}w_{n}^{\left(  s+1\right)  }\left(
\frac{x}{1-x};\alpha,\beta,r\right)  , \label{5}%
\end{equation}
which is the generalization of \cite{Kargin}%
\begin{equation}
\sum_{k=0}^{\infty}\left(  r+k\beta\mid\alpha\right)  _{n}x^{k}=\frac
{1}{\left(  1-x\right)  }w_{n}\left(  \frac{x}{1-x};\alpha,\beta,r\right)  .
\label{21}%
\end{equation}

For the next example, for every $s>0,$ if we take
\[
f\left(  x\right)  =\left(  1+x\right)  ^{s}=\sum_{k=0}^{\infty}\binom{s}%
{k}x^{k}%
\]
in (\ref{1}) and use (\ref{38}), we derive%
\[
\sum_{k=0}^{\infty}\binom{s}{k}\left(  r+k\beta\mid\alpha\right)  _{n}%
x^{k}=\left(  1+x\right)  ^{s}w_{n}^{\left(  -s\right)  }\left(  \frac
{-x}{1+x};\alpha,\beta,r\right)  ,
\]
which is the generalization of the identity in \cite[Eq. 9]{BandDil}.

For the last example of the evaluation of the series in closed forms, we deal
with the digamma function $\psi\left(  x\right)  $. The digamma function
$\psi\left(  x\right)  $ can be given by the Taylor series at $x=1,$ \cite[Eq.
6.3.14]{Abromowitz}
\begin{equation}
\psi\left(  x+1\right)  =-\gamma+\sum_{k=1}^{\infty}\zeta\left(  k+1\right)
\left(  -1\right)  ^{k+1}x^{k},\text{ \ \ }\left\vert x\right\vert <1,
\label{23}%
\end{equation}
where $\zeta\left(  s\right)  $ is the Riemann zeta function and $\gamma$ is
the Euler's constant. Taking (\ref{23}) in (\ref{1}) gives%
\begin{equation}
\left(  \beta x^{1-\alpha/\beta}D\right)  ^{n}\left[  x^{r/\beta}\psi\left(
x+1\right)  \right]  =x^{\left(  r-n\alpha\right)  /\beta}\sum_{k=0}%
^{n}S\left(  n,k;\alpha,\beta,r\right)  \beta^{k}x^{k}\psi^{\left(  k\right)
}\left(  x+1\right)  , \label{24}%
\end{equation}
where $\psi^{\left(  m\right)  }\left(  x\right)  $ is the polygamma function
which can be written more compactly in terms of the Hurwitz zeta function
$\zeta\left(  s,x\right)  $ as\textbf{ }\cite[Eq. 6.4.10]{Abromowitz}%
\begin{equation}
\psi^{\left(  m\right)  }\left(  x\right)  =\left(  -1\right)  ^{m+1}%
m!\zeta\left(  m+1,x\right)  . \label{25}%
\end{equation}
Here $m>0,$ and $x$ is any complex number not equal to a negative integer.
Using (\ref{23}) in the left hand side and (\ref{25}) in the right hand side
of (\ref{24}), we obtain the following theorem.

\begin{theorem}
\label{teo2}For $\left\vert x\right\vert <1,$%
\begin{align}
&  \sum_{k=1}^{\infty}\zeta\left(  k+1\right)  \left(  r+k\beta\mid
\alpha\right)  _{n}x^{k}\label{26}\\
&  \quad=-\left(  r\mid\alpha\right)  _{n}\left(  \psi\left(  1-x\right)
+\gamma\right)  +\sum_{k=1}^{n}S\left(  n,k;\alpha,\beta,r\right)
k!\zeta\left(  k+1,1-x\right)  \left(  \beta x\right)  ^{k}.\nonumber
\end{align}

\end{theorem}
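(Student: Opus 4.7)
The plan is to specialize the operator identity (\ref{1}) to $f(x)=\psi(x+1)$, which is exactly the content of (\ref{24}), and then to evaluate the two sides of (\ref{24}) by two different series expansions. The core computations are routine; the one real nuisance is sign bookkeeping, since the theorem is stated with $\psi(1-x)$ and $\zeta(k+1,1-x)$ while (\ref{24}) is written in terms of $\psi(x+1)$ and its derivatives, so a substitution $x\mapsto -x$ is required at the end.

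For the left-hand side of (\ref{24}), I would insert (\ref{23}) into $x^{r/\beta}\psi(x+1)$ and apply the operator term by term. Because $\left(\beta x^{1-\alpha/\beta}D\right)^{n}\!\left[x^{(r+k\beta)/\beta}\right]=(r+k\beta\mid\alpha)_{n}\,x^{(r+k\beta-n\alpha)/\beta}$, after factoring out $x^{(r-n\alpha)/\beta}$ the result is
\[
-\gamma\,(r\mid\alpha)_{n}+\sum_{k=1}^{\infty}\zeta(k+1)(-1)^{k+1}(r+k\beta\mid\alpha)_{n}\,x^{k}.
\]
Termwise differentiation is legitimate on $|x|<1$ by uniform convergence of (\ref{23}) on compact subsets of that disc.

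For the right-hand side of (\ref{24}), the $k=0$ summand is $(r\mid\alpha)_{n}\,\psi(x+1)$, using $S(n,0;\alpha,\beta,r)=(r\mid\alpha)_{n}$ (obtained by applying (\ref{1}) to $f\equiv 1$). For $k\geq 1$, relation (\ref{25}) converts $\psi^{(k)}(x+1)$ into $(-1)^{k+1}k!\,\zeta(k+1,x+1)$, producing the contribution $\sum_{k=1}^{n}S(n,k;\alpha,\beta,r)\,\beta^{k}x^{k}(-1)^{k+1}k!\,\zeta(k+1,x+1)$.

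Equating the two expansions, cancelling $x^{(r-n\alpha)/\beta}$, and finally substituting $x\mapsto -x$ (valid on $|x|<1$) sends $\psi(x+1)\to\psi(1-x)$ and $\zeta(k+1,x+1)\to\zeta(k+1,1-x)$, while the factor $(-x)^{k}=(-1)^{k}x^{k}$ combines with the $(-1)^{k+1}$ already present to leave a uniform sign $-1$ on both sides; a final rearrangement isolates the infinite series on the left and delivers (\ref{26}). As a sanity check, the case $n=0$ collapses to $-(\psi(1-x)+\gamma)=\sum_{k\ge 1}\zeta(k+1)\,x^{k}$, which is precisely (\ref{23}) after the substitution $x\mapsto -x$, so the sign accounting is consistent.
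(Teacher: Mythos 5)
Your proposal is correct and follows essentially the same route as the paper: substitute the Taylor series (\ref{23}) into the left-hand side of (\ref{24}), use (\ref{25}) on the right-hand side, and compare (the paper leaves the final substitution $x\mapsto -x$ implicit, which you carry out explicitly and verify with the $n=0$ check). No gaps.
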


As a consequences of Theorem \ref{teo2}, the following sums are obtained:

For $\left(  \alpha,\beta,r\right)  =\left(  0,\beta,r\right)  ,$%
\[
\sum_{k=1}^{\infty}\zeta\left(  k+1\right)  \left(  r+k\beta\right)  ^{n}%
x^{k}=-r^{n}\left(  \psi\left(  1-x\right)  +\gamma\right)  +\sum_{k=1}%
^{n}W_{\beta,r}\left(  n,k\right)  k!\zeta\left(  k+1,1-x\right)  \left(
\beta x\right)  ^{k}.
\]

For $\left(  \alpha,\beta,r\right)  =\left(  0,1,r\right)  ,$%
\[
\sum_{k=1}^{\infty}\zeta\left(  k+1\right)  \left(  r+k\right)  ^{n}%
x^{k}=-r^{n}\left(  \psi\left(  1-x\right)  +\gamma\right)  +\sum_{k=1}^{n}%
\genfrac{\{}{\}}{0pt}{}{n+r}{k+r}%
_{r}k!\zeta\left(  k+1,1-x\right)  x^{k}.
\]

For $\left(  \alpha,\beta,r\right)  =\left(  0,1,1\right)  ,$ using the
relation $%
\genfrac{\{}{\}}{0pt}{}{n}{k}%
_{0}=%
\genfrac{\{}{\}}{0pt}{}{n}{k}%
_{1}=%
\genfrac{\{}{\}}{0pt}{}{n}{k}%
,$%

\begin{equation}
\sum_{k=1}^{\infty}\zeta\left(  k+1\right)  \left(  k+1\right)  ^{n}%
x^{k}=-\left(  \psi\left(  1-x\right)  +\gamma\right)  +\sum_{k=1}^{n}%
\genfrac{\{}{\}}{0pt}{}{n+1}{k+1}%
k!\zeta\left(  k+1,1-x\right)  x^{k}. \label{30}%
\end{equation}

Finally, setting $x=1/2$ in the above equation, using $\psi\left(  1/2\right)
=-\gamma-2\log2$ and $\zeta\left(  s,1/2\right)  =\left(  2^{s}-1\right)
\zeta\left(  s\right)  $ yields%
\[
\sum_{k=2}^{\infty}\frac{\zeta\left(  k\right)  k^{n}}{2^{k}}=\log2+\sum
_{k=1}^{n}%
\genfrac{\{}{\}}{0pt}{}{n+1}{k+1}%
k!\left(  1-2^{-k-1}\right)  \zeta\left(  k+1\right)  .
\]
Therefore, some special cases of the above equation involving infinite sums of
Riemann zeta function can be listed as%
\begin{align*}
\sum_{k=2}^{\infty}\frac{\zeta\left(  k\right)  }{2^{k}}  &  =\log2,\\
\sum_{k=2}^{\infty}\frac{\zeta\left(  k\right)  k}{2^{k}}  &  =\log2+\frac
{3}{4}\zeta\left(  2\right)  ,\\
\sum_{k=2}^{\infty}\frac{\zeta\left(  k\right)  k^{2}}{2^{k}}  &  =\log
2+\frac{9}{4}\zeta\left(  2\right)  +\frac{14}{8}\zeta\left(  3\right)  .
\end{align*}

Note that similar result of (\ref{30}) can be found in \cite[Proposition
20]{BandDil}.

Now, we want to prove the equation in (\ref{7}) by using (\ref{1}). Applying
(\ref{1}) to the both sides of the integral%
\[
\frac{1}{\left(  1-x\right)  ^{s}}=\frac{1}{\Gamma\left(  s\right)  }%
{\displaystyle\int\limits_{0}^{\infty}}
t^{s-1}e^{-\left(  1-x\right)  t}dt
\]
and using (\ref{4}) we obtain%
\[
\frac{x^{\left(  r-n\alpha\right)  /\beta}}{\left(  1-x\right)  ^{s+1}}%
w_{n}^{\left(  s+1\right)  }\left(  \frac{x}{1-x};\alpha,\beta,r\right)
=\frac{1}{\Gamma\left(  s\right)  }%
{\displaystyle\int\limits_{0}^{\infty}}
t^{s-1}e^{-t}\left(  \beta x^{1-\alpha/\beta}D\right)  ^{n}\left[  x^{r/\beta
}e^{xt}\right]  dt.
\]
At the same time, we have%
\begin{align*}
\left(  \beta x^{1-\alpha/\beta}D\right)  ^{n}\left[  x^{r/\beta}%
e^{xt}\right]   &  =\left(  \beta x^{1-\alpha/\beta}D\right)  ^{n}\sum
_{k=0}^{\infty}\frac{t^{k}}{k!}x^{k+r/\beta}\\
&  =\sum_{k=0}^{\infty}\frac{t^{k}}{k!}\left(  \beta x^{1-\alpha/\beta
}D\right)  ^{n}x^{k+r/\beta}\\
&  =x^{\left(  r-n\alpha\right)  /\beta}\sum_{k=0}^{\infty}\frac{t^{k}}%
{k!}\left(  k\beta+r\mid\alpha\right)  _{n}x^{k}\\
&  =x^{\left(  r-n\alpha\right)  /\beta}e^{xt}S_{n}\left(  x\beta t\right)  .
\end{align*}
Thus, we have a different representation of (\ref{7}) as%
\[
\frac{1}{\left(  1-x\right)  ^{s+1}}w_{n}^{\left(  s+1\right)  }\left(
\frac{x}{1-x};\alpha,\beta,r\right)  =\frac{1}{\Gamma\left(  s\right)  }%
{\displaystyle\int\limits_{0}^{\infty}}
t^{s-1}S_{n}\left(  x\beta t\right)  e^{-\left(  1-x\right)  t}dt.
\]

Now, we want to add some examples for the evaluation of integrals. Before
giving the examples we need to mention that in the rest of this section we use
the well-known estimate for the gamma function:%
\[
\left\vert \Gamma\left(  x+iy\right)  \right\vert \sim\sqrt{2\pi}\left\vert
y\right\vert ^{x-\frac{1}{2}}e^{-x-\frac{\pi}{2}\left\vert y\right\vert },
\]
($\left\vert y\right\vert \rightarrow\infty$) for any fixed real $x.$ This
explains the behavior of the gamma function on vertical lines $\left\{
t=a+iz:-\infty<z<\infty,\text{ }0<a<1\right\}  $.

For the first example, let us start from the Mellin integral representation
\cite[Formula 5.37]{Oberhettinger}%

\[
\frac{1}{\left(  1+x\right)  ^{s+1}}=\frac{1}{2\pi i\Gamma\left(  s+1\right)
}%
{\displaystyle\int\limits_{a-i\infty}^{a+i\infty}}
x^{-t}\Gamma\left(  t\right)  \Gamma\left(  s+1-t\right)  dt,
\]
where $s\geq0,$ $0<x<1.$ Apply (\ref{1}) to the both sides of the above
integral to obtain
\[
\left(  \beta x^{1-\alpha/\beta}D\right)  ^{n}\left[  \frac{x^{r/\beta}%
}{\left(  1+x\right)  ^{s+1}}\right]  =\frac{x^{\left(  r-n\alpha\right)
/\beta}}{2\pi i\Gamma\left(  s+1\right)  }%
{\displaystyle\int\limits_{a-i\infty}^{a+i\infty}}
\left(  r-\beta t\mid\alpha\right)  _{n}x^{-t}\Gamma\left(  t\right)
\Gamma\left(  s+1-t\right)  dt.
\]
From the left hand side of the above equation, we derive%
\begin{align*}
\left(  \beta x^{1-\alpha/\beta}D\right)  ^{n}\left[  \frac{x^{r/\beta}%
}{\left(  1+x\right)  ^{s+1}}\right]   &  =\sum_{k=0}^{\infty}\binom{s+k}%
{k}\left(  -1\right)  ^{k}\left(  \beta x^{1-\alpha/\beta}D\right)
^{n}x^{\left(  k\beta+r\right)  /\beta}\\
&  =x^{\left(  r-n\alpha\right)  /\beta}\sum_{k=0}^{\infty}\binom{s+k}%
{k}\left(  r+k\beta\mid\alpha\right)  _{n}\left(  -x\right)  ^{k}\\
&  =\frac{x^{\left(  r-n\alpha\right)  /\beta}}{\left(  1+x\right)  ^{s+1}%
}w_{n}^{\left(  s+1\right)  }\left(  \frac{-x}{1+x};\alpha,\beta,r\right)  .
\end{align*}
Therefore, we have
\begin{equation}
\frac{1}{\left(  1+x\right)  ^{s+1}}w_{n}^{\left(  s+1\right)  }\left(
\frac{-x}{1+x};\alpha,\beta,r\right)  =\frac{1}{2\pi i\Gamma\left(
s+1\right)  }%
{\displaystyle\int\limits_{a-i\infty}^{a+i\infty}}
\left(  r-\beta t\mid\alpha\right)  _{n}x^{-t}\Gamma\left(  t\right)
\Gamma\left(  s+1-t\right)  dt, \label{12}%
\end{equation}

Secondly, replace $x$ by $\left(  r+k\beta\right)  x$, multiply both sides by
$\left(  r+k\beta\mid\alpha\right)  _{n}$ and sum for $k=0,1,\ldots,$ in the
integral:%
\[
e^{-x}=\frac{1}{2\pi i}%
{\displaystyle\int\limits_{a-i\infty}^{a+i\infty}}
x^{-t}\Gamma\left(  t\right)  dt.
\]
Then, we have%
\[
e^{-rx}\sum_{k=0}^{\infty}\left(  r+k\beta\mid\alpha\right)  _{n}\left(
e^{-\beta x}\right)  ^{k}=\frac{1}{2\pi i}%
{\displaystyle\int\limits_{a-i\infty}^{a+i\infty}}
x^{-t}\sum_{k=0}^{\infty}\frac{\left(  k\beta+r\mid\alpha\right)  _{n}%
}{\left(  k\beta+r\right)  ^{t}}\Gamma\left(  t\right)  dt.
\]
From (\ref{21}), the above integral becomes%
\[
\frac{e^{-rx}}{1-e^{-\beta x}}w_{n}\left(  \frac{e^{-\beta x}}{1-e^{-\beta x}%
};\alpha,\beta,r\right)  =\frac{1}{2\pi i}%
{\displaystyle\int\limits_{a-i\infty}^{a+i\infty}}
x^{-t}\sum_{k=0}^{\infty}\frac{\left(  k\beta+r\mid\alpha\right)  _{n}%
}{\left(  k\beta+r\right)  ^{t}}\Gamma\left(  t\right)  dt.
\]
The interesting part of the above integral, given in the following
proposition, is appeared when $\alpha\rightarrow0.$

\begin{proposition}
For all $x>0,$ $\operatorname{Re}\left(  \beta\right)  >0,$ $\operatorname{Re}%
\left(  r\right)  >0$ $n=0,1,2,\ldots$ and $a>n+1$%
\begin{equation}
\frac{e^{-rx}}{1-e^{-\beta x}}w_{n}\left(  \frac{e^{-\beta x}}{1-e^{-\beta x}%
};0,\beta,r\right)  =\frac{\beta^{n}}{2\pi i}%
{\displaystyle\int\limits_{a-i\infty}^{a+i\infty}}
\left(  \beta x\right)  ^{-t}\zeta\left(  t-n,\frac{r}{\beta}\right)
\Gamma\left(  t\right)  dt. \label{22}%
\end{equation}

\end{proposition}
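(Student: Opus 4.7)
The plan is to read the proposition as the $\alpha\to 0$ specialization of the preceding integral identity, so very little new work is needed.

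First I would record what the paragraph preceding the proposition already produced, namely
\[
\frac{e^{-rx}}{1-e^{-\beta x}}\,w_{n}\!\left(\frac{e^{-\beta x}}{1-e^{-\beta x}};\alpha,\beta,r\right)
=\frac{1}{2\pi i}\int_{a-i\infty}^{a+i\infty}x^{-t}\sum_{k=0}^{\infty}\frac{(k\beta+r\mid\alpha)_{n}}{(k\beta+r)^{t}}\,\Gamma(t)\,dt,
\]
which was obtained by applying the Mellin representation of $e^{-x}$, substituting $(r+k\beta)x$, multiplying by $(r+k\beta\mid\alpha)_{n}$, summing over $k$, and using (\ref{21}). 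The left-hand side is continuous in $\alpha$ at $0$, so I would focus on evaluating the right-hand side in the limit $\alpha\to 0$.

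Next I would simplify the kernel under the integral. Since $(k\beta+r\mid 0)_{n}=(k\beta+r)^{n}$, for $\alpha=0$ the series becomes
\[
\sum_{k=0}^{\infty}\frac{(k\beta+r)^{n}}{(k\beta+r)^{t}}
=\sum_{k=0}^{\infty}(k\beta+r)^{n-t}
=\beta^{n-t}\sum_{k=0}^{\infty}\left(k+\tfrac{r}{\beta}\right)^{n-t}
=\beta^{n-t}\zeta\!\left(t-n,\tfrac{r}{\beta}\right).
\]
Here the Hurwitz series converges absolutely because $\operatorname{Re}(t-n)=a-n>1$ by the hypothesis $a>n+1$, and because $\operatorname{Re}(r/\beta)>0$ by the positivity hypotheses on $r$ and $\beta$. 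Substituting this identification and collecting $\beta^{n}$ outside yields exactly (\ref{22}).

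The only nontrivial point is to justify the interchange of summation and contour integration on the vertical line $\operatorname{Re}(t)=a$, together with the passage to the limit $\alpha\to 0$ under the integral sign. I would handle both issues with the estimate
\[
|\Gamma(a+iy)|\sim\sqrt{2\pi}\,|y|^{a-1/2}e^{-\pi|y|/2},\qquad |y|\to\infty,
\]
explicitly recalled just before the examples: it forces exponential decay of the integrand on the line, which combined with the uniform bound $|(k\beta+r)^{n-t}|\le (k\operatorname{Re}\beta+\operatorname{Re}r)^{n-a}$ (summable in $k$ since $a>n+1$) allows both Fubini and the limit under the integral by dominated convergence. This decay argument is what I expect to be the main obstacle, since once it is in place the rest of the proof is an algebraic simplification. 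Everything else reduces to substitution and the definition of the Hurwitz zeta function.
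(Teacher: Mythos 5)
Your proposal is correct and follows essentially the same route as the paper: the paper derives the identity by applying the Mellin representation of $e^{-x}$ with $x\mapsto(r+k\beta)x$, summing against $\left(  r+k\beta\mid\alpha\right)  _{n}$, identifying the left side via (\ref{21}), and then letting $\alpha\rightarrow0$ so that the inner sum becomes $\beta^{n-t}\zeta\left(  t-n,r/\beta\right)  $. Your added attention to the convergence condition $a>n+1$ and to justifying the sum--integral interchange via the Gamma-function decay estimate only makes the argument more careful than the paper's formal manipulation.
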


We note that for $\alpha\rightarrow0,$ generalized geometric polynomials
become
\[
w_{n}\left(  x;0,\beta,r\right)  =\sum_{k=0}^{n}W_{\beta,r}\left(  n,k\right)
\beta^{k}k!x^{k}.
\]

Moreover, for $\beta=1$ and $n=0$ in (\ref{22}), we have the well-known
inverse Mellin transformation of Hurwitz zeta function \cite[Theorem
12.2]{Apostol}. Moreover, (\ref{12}) and (\ref{22}) are the generalization of
identities in \cite[Proposition 16]{BandDil}.


\begin{thebibliography}{99}                                                                                               %


\bibitem {Albach}C. Ahlbach, J. Usatine, N. Pippenger, Barred preferential
arrangement, Electron. J. Combin. 20 (2) (2013) \#P55.

\bibitem {Abromowitz}M. Abramowitz and I. A. Stegun, Handbook of Mathematical
Functions with Formulas, Graphs, and Mathematical Tables, New York Dover 1972.

\bibitem {Apostol}T. M. Apostol, Introduction to analytic number theory,
Undergraduate Texts in Mathematics, New York-Heidelberg Springer-Verlag (1976).

\bibitem {Benoumhani2}M. Benoumhani, On Whitney numbers of Dowling lattices,
Discrete Math. 159 (1996) 13--33.

\bibitem {B}K. N. Boyadzhiev, A series transformation formula and related
polynomials, Int. J. Math. Math. Sci. 23 (2005) 3849--3866.

\bibitem {B2}K. N. Boyadzhiev, Exponential polynomials, Stirling numbers and
evaluation of some gamma integrals, Abstr. Appl. Anal. 18\ (2009) 1--18.

\bibitem {BandDil}K. N. Boyadzhiev, A. Dil, Geometric polynomials: properties
and applications to series with zeta values, Analysis Math. 42 (3) (2016) 203--224.

\bibitem {Broder}A. Z. Broder, The $r$-Stirling numbers, Discrete Math. 49
(1984) 241--259.

\bibitem {Carlitz}L. Carlitz, Degenerate Stirling, Bernoulli and Eulerian
numbers, Util.Math. 15 (1979) 51--88.

\bibitem {Cenkci1}M. Cenkci, F. T. Howard, Notes on degenerate numbers,
Discrete Math 307 (2007) 2359-2375.

\bibitem {Cenkci2}M. Cenkci, An explicit formula for generalized potential
polynomials and its applications, Discrete Math, 309 (2009) 1498-1510.

\bibitem {Comtet}L. Comtet, Advanced Combinatorics, D. Reidel Dordrecht 1974.

\bibitem {Corcino6}R. B. Corcino, Some theorems on generalized Stirling
numbers, Ars Combin. 60 (2001) 273--286.

\bibitem {Corcino4}R. B. Corcino, C. B. Corcino, On generalized Bell
polynomials, Discret Dyn Nat Soc. 2011 2011 21p.

\bibitem {Corcino5}R. B. Corcino, M. B. Montero, C. B. Corcino, On generalized
Bell numbers for complex argument, Util.Math. 88 (2012) 267-279.

\bibitem {Dagli1}M.C. Da\u{g}l\i, M. Can, On reciprocity formula of character
Dedekind sums and the integral of products of Bernoulli polynomials,
\textit{J. Number Theory} \textbf{156} (2015) 105--124.

\bibitem {Dil1}A. Dil, V. Kurt, Polynomials related to harmonic numbers and
evaluation of harmonic number series I, Integers 12 (2012), A38.

\bibitem {Dil2}A. Dil, V. Kurt, Polynomials related to harmonic numbers and
evaluation of harmonic number series II, Appl. Anal. Discrete Math. 5 (2011), 212--229.

\bibitem {Euler}L. Euler, Institutiones Calculi Differentialis, Vol. II,
Academie Imperialis Scientiarum Petropolitanae, 1755.

\bibitem {Graham}R. L. Graham, D. E. Knuth, O. Patashnik, Concrete
Mathematics, Addison-Wesley Publ. Co. New York 1994.

\bibitem {Howard}F.T.Howard, Degenerate weighted Stirling numbers, Discrete
Math. 57 (1985) 45--58.

\bibitem {Howard1}F.T. Howard, Explicit formulas for degenerate Bernoulli
numbers, Discrete Math. 162 (1996) 175--185.

\bibitem {Howard2}F. T. Howard, Sums of powers of integers via generating
functions, Fibonacci Quart. 34 (3) (1996) 244--256.

\bibitem {HSU}L. C. Hsu, P. J. S. Shiue, A unified approach to generalized
Stirling numbers,\ Adv. in Appl. Math., 20 3 (1998) 366--384.

\bibitem {Kargin}L. Kargin, R. B. Corcino, Generalization of Mellin derivative
and its applications, Integral Transforms Spec. Funct. 27 (8) (2016) 620--631.

\bibitem {Kim}T. Kim, Jong Jin Seo, Degenerate Bernoulli Numbers and
Polynomials of the Second Kind, International Journal of Mathematical Analysis
9 (26) 2015 1269-1278.

\bibitem {Knopf}P. M. Knopf, The operator $\left(  x\frac{d}{dx}\right)  ^{n}$
and its application to series, Math. Mag. 76 (5) (2003) 364--371.

\bibitem {Merca2}M. Merca, A new connection between $r$-Whitney numbers and
Bernoulli polynomials. Integral Transforms Spec. Funct. 25 (12) (2014) 937--942.

\bibitem {Merca1}M. Merca, A connection between Jacobi--Stirling numbers and
Bernoulli polynomials, \textit{J. Number Theory }151 (2015) 223--229.

\bibitem {Merca3}M. Merca, A note on the $r$-Whitney numbers of Dowling
lattices, C. R. Acad. Sci. Paris, Ser. I 351 (2013) 649--655.

\bibitem {Mezo2}I. Mez\H{o}, A new formula for the Bernoulli polynomials,
Results in Mathematics, 58 (2010) 329--335.

\bibitem {MezoandRamirez}I. Mez\H{o}, J. L. Ramirez, Some identities for the
$r$-Whitney numbers, Aeqationes Mathematicae, 90 (2016) 393-406.

\bibitem {Mihioubi2}M. Mihoubi, M. Tiachachat, Some applications of the
$r$-Whitney numbers, C.R. Acad. Sci. Paris Ser.I 352 (2014) 965--969.

\bibitem {Oberhettinger}F. Oberhettinger, Tables of Mellin Transforms,
Springer Berlin. 1974.

\bibitem {Gould}J. Quaintance, H. W. Gould, Combinatorial Identities for
Stirling Numbers: The unpublished notes of H. W. Gould, World Scientific
Publishing Company, 2015.

\bibitem {Aimin}A. Xu, Extensions of Spivey's Bell number formula, Electron.
J. Combin, 19 (2) (2012) p.8.

\bibitem {Young}P. T. Young, Degenerate Bernoulli polynomials, generalized
factorial sums, and their applications, \textit{J. Number Theory}, 128 (4)
(2008) 738-758.

\bibitem {Young2}P. T. Young, Congruences for degenerate number sequences,
Discrete Math, 270, (1--3) (2003) 279--289.
\end{thebibliography}
\end{document}